\documentclass[11pt]{article}

\usepackage{latexsym,amssymb,amsmath,amsfonts,latexsym,amscd,amsthm}

\textwidth 15cm

\title{\textbf{Contact metric $\left(\kappa,\mu\right)$-spaces as\\ bi-Legendrian manifolds}}
 \author{\begin{tabular}{cc}
  \textsc{Beniamino Cappelletti Montano, Luigia Di Terlizzi}\\
   Department of Mathematics, University of Bari \\
  Via E. Orabona, 4 \\
  I-70125 Bari (Italy) \\
  \textsf{cappelletti@dm.uniba.it, terlizzi@dm.uniba.it}
    \end{tabular}}

\newtheorem{lem}{Lemma}[section]
\newtheorem{cor}{Corollary}[section]
\newtheorem{theo}{Theorem}[section]

\newtheorem{exam}{Example}[section]
\newtheorem{rem}{Remark}[section]
\newtheorem{prop}{Proposition}[section]
\numberwithin{equation}{section}

\date{}

\begin{document}

\maketitle

\begin{abstract}
We regard a contact metric manifold whose Reeb vector field belongs to the $\left(\kappa,\mu\right)$-nullity distribution as a bi-Legendrian manifold and we study its canonical bi-Legendrian structure. Then we characterize contact metric $\left(\kappa,\mu\right)$-spaces in terms of a canonical connection which can be naturally defined on them.
\end{abstract}
\textbf{2000 Mathematics Subject Classification.}  53C12, 53C15, 53C25.
\\
\textbf{Keywords and phrases.} Contact metric $\left(\kappa,\mu\right)$-manifolds, $\left(\kappa,\mu\right)$-nullity distribution, Legendrian foliations, bi-Legendrian structures.

\section{Introduction}

Contact metric $\left(\kappa,\mu\right)$-spaces, introduced in
\cite{blair1} by D. E. Blair, T. Kouforgiorgos and B. J.
Papantoniou, are those contact metric manifolds
$\left(M,\phi,\xi,\eta,g\right)$ for which the Reeb vector field
$\xi$ belongs to the $\left(\kappa,\mu\right)$-nullity
distribution, i.e. satisfies, for all vector fields $V$ and $W$ on
$M$,
\begin{equation}\label{definizione}
R_{V
W}\xi=\kappa\left(\eta\left(W\right)V-\eta\left(V\right)W\right)+\mu\left(\eta\left(W\right)hV-\eta\left(V\right)hW\right),
\end{equation}
for some real numbers $\kappa$ and $\mu$, where $2h$ is the Lie
derivative of $\phi$ in the direction of $\xi$. This definition
can be regarded as a generalization both of the Sasakian condition
$R_{V W}\xi=\eta\left(W\right)V-\eta\left(V\right)W$ and of those
contact metric manifolds verifying $R_{V W}\xi=0$ which were
studied by D. E. Blair in \cite{blair-1}.

Recently contact metric $\left(\kappa,\mu\right)$-spaces have been
studied by various authors (\cite{boeckx0}, \cite{boeckx1},
\cite{boeckx2}, \cite{cho}, \cite{kouforgiorgos}, etc.) and
several important properties of these manifolds have been
discovered. In fact there are many motivations for studying
$\left(\kappa,\mu\right)$-spaces: the first is that, in the
non-Sasakian case (that is for $\kappa\neq 1$), the condition
\eqref{definizione} determines the curvature completely; moreover,
while the values of $\kappa$ and $\mu$ change, the form of
\eqref{definizione} is invariant under $\cal D$-homothetic
deformations; finally, there are non-trivial examples of these
manifolds, the most important being the unit tangent sphere bundle
of a Riemannian manifold of constant sectional curvature with the usual contact metric
structure.

A complete classification of contact metric
$\left(\kappa,\mu\right)$-spaces has been given in \cite{boeckx1}
by E. Boeckx,  who proved
also that any non-Sasakian contact metric $(\kappa,\mu)$-space is
locally homogeneous and strongly locally $\phi$-symmetric
(\cite{boeckx0}).

One of the peculiarities of these manifolds is that they give rise
to three mutually orthogonal distributions ${\cal D}_{\lambda}$,
${\cal D}_{-\lambda}$ and $\mathbb{R}\xi$, corresponding to the
eigenspaces of the operator $h$. In particular ${\cal
D}_{\lambda}$ and ${\cal D}_{-\lambda}$ define two transverse
Legendrian foliations of $M$ so that  these manifolds are endowed
with a bi-Legendrian structure.

In the same years the theory of Legendrian foliations has been
developed by M. Y. Pang, P. Libermann and N. Jayne (cf.
\cite{pang}, \cite{libermann}, \cite{jayne1}), so it seems to be
tempting to use the techniques and the language of Legendrian
foliations for the study of contact metric
$\left(\kappa,\mu\right)$-spaces and to begin the investigation of
the interactions between these two areas of the contact geometry.
This is what we set out to do in this article.

The paper is organized as follows. After some preliminaries on
contact metric manifolds and Legendrian foliations, in $\S$ 3 we
study the Legendrian foliations canonically defined in any contact
metric $\left(\kappa,\mu\right)$-space. We find, for both the
foliations, an explicit formula of the invariant $\Pi$ introduced
by Pang for classifying Legendrian foliations (cf. \cite{pang})
and we see that the Legendrian foliations in question are,
according to this classification, either non-degenerate or flat.
Then we relate these invariants to the invariant $I_M$ used by
Boeckx in \cite{boeckx1} for classify contact metric
$\left(\kappa,\mu\right)$-spaces. In $\S$ 4 we attach to any
contact metric $\left(\kappa,\mu\right)$-space a linear connection
in a canonical way. We study the properties of this connection
and, using it, we give an interpretation of the notion of contact
metric $\left(\kappa,\mu\right)$-space in terms of bi-Legendrian
structures. In particular, we prove the following characterization
of contact metric $\left(\kappa,\mu\right)$-spaces.

\begin{theo}\label{principale}
A contact metric manifold  $\left(M,\phi,\xi,\eta,g\right)$ is a
contact metric $\left(\kappa,\mu\right)$-space if and only if $M$
admits an orthogonal bi-Legendrian structure $\left({\cal F},
{\cal G}\right)$ such that the corresponding bi-Legendrian
connection $\bar\nabla$ satisfies $\bar{\nabla}\phi=0$ and
$\bar{\nabla} h=0$. Furthermore, the bi-Legendrian structure
$\left({\cal F}, {\cal G}\right)$ coincides with that one
determined by the eigenspaces of $h$.
\end{theo}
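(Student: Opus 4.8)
The plan is to prove the two implications separately, using throughout the properties of the bi-Legendrian connection $\bar\nabla$ established in $\S 4$, in particular that it parallelizes the splitting $TM=T{\cal F}\oplus T{\cal G}\oplus\mathbb{R}\xi$ and satisfies $\bar\nabla\xi=0$, $\bar\nabla\eta=0$, $\bar\nabla d\eta=0$, together with the explicit expression $\bar\nabla=\nabla+H$ relating it to the Levi-Civita connection.

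For the necessity, I would start from a contact metric $(\kappa,\mu)$-space. By $\S 3$ the eigendistributions ${\cal D}_{\lambda},{\cal D}_{-\lambda}$ of $h$ (with $\lambda=\sqrt{1-\kappa}$ constant in the non-Sasakian case) are two transverse Legendrian foliations, mutually orthogonal because $h$ is $g$-symmetric; I take $({\cal F},{\cal G})$ to be this canonical orthogonal bi-Legendrian structure. To see $\bar\nabla h=0$, note that for $X\in\Gamma({\cal D}_{\pm\lambda})$ one has $hX=\pm\lambda X$ with $\lambda$ constant, and since $\bar\nabla$ preserves ${\cal D}_{\pm\lambda}$, $(\bar\nabla_V h)X=\bar\nabla_V(\pm\lambda X)\mp\lambda\bar\nabla_V X=0$; as $h\xi=0$ and $\bar\nabla\xi=0$ the same holds on $\mathbb{R}\xi$. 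To obtain $\bar\nabla\phi=0$ I would combine the expression $\bar\nabla=\nabla+H$ with the standard contact-metric identity $(\nabla_V\phi)W=g(V+hV,W)\xi-\eta(W)(V+hV)$: writing $(\bar\nabla_V\phi)W=(\nabla_V\phi)W+H(V,\phi W)-\phi H(V,W)$ and substituting $H(\,\cdot\,,\xi)=\bar\nabla_{\cdot}\xi-\nabla_{\cdot}\xi=\phi(I+h)$ together with the remaining components of $H$, every term cancels; equivalently one recognizes $\bar\nabla$ as the canonical connection of the $(\kappa,\mu)$-space, already known to parallelize all the structure tensors. In the Sasakian case $h=0$ any orthogonal pair of transverse Legendrian foliations serves and the computation is its specialization.

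For the sufficiency, assume $M$ carries an orthogonal bi-Legendrian structure with $\bar\nabla\phi=0$ and $\bar\nabla h=0$. Orthogonality forces $\phi(T{\cal F})=T{\cal G}$ and $\phi(T{\cal G})=T{\cal F}$, since the leaves are Legendrian and the fundamental form satisfies $\Phi(X,Y)=g(X,\phi Y)$, so $\phi(T{\cal F})\perp T{\cal F}$. Next, $\bar\nabla h=0$ makes the coefficients of the characteristic polynomial of $h$ $\bar\nabla$-parallel, hence constant, so the eigenvalues of $h$ are constant; together with $h\xi=0$, $\phi h=-h\phi$ and the explicit form of $\bar\nabla$ one checks that the off-diagonal blocks of $h$ relative to $T{\cal F}\oplus T{\cal G}$ vanish, whence $T{\cal F},T{\cal G}$ are the eigendistributions of $h$ and coincide with ${\cal D}_{\pm\lambda}$, which is the \textquotedblleft furthermore\textquotedblright\ assertion. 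Finally, to recover \eqref{definizione} I would compute the Levi-Civita curvature on $\xi$ through $\bar\nabla$: since $\bar\nabla\xi=0$ we have $\bar R_{VW}\xi=0$, and from $H(\,\cdot\,,\xi)=\phi(I+h)$ the hypotheses $\bar\nabla\phi=0$, $\bar\nabla h=0$ give $(\bar\nabla_V H)(W,\xi)=0$; substituting into the general relation between $R$ and $\bar R$ leaves only the algebraic terms $H(V,H(W,\xi))-H(W,H(V,\xi))$ and the torsion contribution, which, evaluated with $H(\,\cdot\,,\xi)=\phi(I+h)$ and the eigenvalue structure just established, reproduce exactly $\kappa(\eta(W)V-\eta(V)W)+\mu(\eta(W)hV-\eta(V)hW)$ with $\kappa=1-\lambda^{2}$ constant.

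The main obstacle is this last curvature computation in the sufficiency: one must carry the full difference tensor $H$, not merely its value on $\xi$, through the second-order terms and the torsion contribution of $\bar\nabla$, and verify that the result collapses precisely to the $(\kappa,\mu)$-form rather than to a more general curvature expression. The parallel step of showing that the off-diagonal blocks of $h$ vanish — that is, that the given bi-Legendrian structure is forced to be the eigenspace one — is the other delicate point, and it is exactly there that the hypothesis $\bar\nabla\phi=0$, linking the two Legendrian distributions interchanged by $\phi$, must be used in tandem with $\bar\nabla h=0$.
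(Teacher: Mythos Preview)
Your forward direction is essentially the paper's argument: the eigendistributions ${\cal D}_{\pm\lambda}$ give the orthogonal bi-Legendrian structure, and $\bar\nabla h=0$ follows because $\bar\nabla$ preserves the eigenspaces while $\lambda$ is constant. For $\bar\nabla\phi=0$ the paper does not compute with a difference tensor but invokes Proposition~\ref{metrica}: since ${\cal F}_{\lambda}$ and ${\cal F}_{-\lambda}$ are known to be totally geodesic (a result of \cite{blair1}), one gets $\bar\nabla\phi=0$ (and $\bar\nabla g=0$) immediately. Your direct computation would also work, but the appeal to Proposition~\ref{metrica} is shorter.

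The sufficiency is where you diverge from the paper, and your route is substantially harder. The paper does \emph{not} compute $R_{VW}\xi$ from $\bar R$ and the difference tensor. Instead it first establishes the relation
\[
\nabla_Z Z'=\bar\nabla_Z Z'+\eta(\nabla_Z Z')\xi,\qquad Z,Z'\in\Gamma({\cal D}),
\]
(using $\bar\nabla g=0$, which comes from $\bar\nabla\phi=0$ via Proposition~\ref{metrica}, together with total geodesicity of the leaves). From this one line it follows at once that $g((\nabla_X h)Y,Z)=g((\bar\nabla_X h)Y,Z)=0$ for $X,Y,Z\in\Gamma({\cal D})$, i.e.\ $h$ is $\eta$-parallel. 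The paper then quotes Theorem~4 of Boeckx--Cho \cite{boeckx2}, which says that a (non--$K$-contact) contact metric manifold with $\eta$-parallel $h$ is a $(\kappa,\mu)$-space. This replaces your entire curvature computation by a citation; in particular you never need the full difference tensor $H$, only its restriction to ${\cal D}\times{\cal D}$, and you never have to identify $\mu$ by hand.

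Your plan to establish the ``furthermore'' clause \emph{before} proving the $(\kappa,\mu)$ condition also creates an unnecessary difficulty. Showing that the off-diagonal blocks of $h$ vanish (equivalently, that $h$ preserves $T{\cal F}$ and $T{\cal G}$) is fine---indeed Proposition~\ref{metrica}(iv) gives exactly this from $\bar\nabla\phi=0$---but that alone does \emph{not} force $T{\cal F}$ and $T{\cal G}$ to be eigenspaces: a priori $h|_{T{\cal F}}$ could have several eigenvalues. The paper avoids this by first obtaining the $(\kappa,\mu)$ condition, so that $h$ has exactly the eigenvalues $0,\pm\lambda$; then, decomposing any $X\in\Gamma(T{\cal F})$ as $X_{+}+X_{-}$ with $X_{\pm}\in\Gamma({\cal D}_{\pm\lambda})$ and using that $hX=\lambda(X_{+}-X_{-})\in\Gamma(T{\cal F})$, one gets $X_{+},X_{-}\in\Gamma(T{\cal F})$ and concludes $T{\cal F}\in\{{\cal D}_{\lambda},{\cal D}_{-\lambda}\}$.

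In short: your outline is not wrong, but the key shortcut you are missing is the reduction of the sufficiency to $\eta$-parallelism of $h$ and the Boeckx--Cho theorem; this bypasses both obstacles you flagged.
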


This theorem should be compared with the well-known results,
obtained by N. Tanaka (cf. \cite{tanaka}) and, independently,  S.
M. Webster (\cite{webster}). They proved that
 any strongly pseudo-convex CR-manifold admits a unique
linear connection $\tilde{\nabla}$ such that the tensors $\phi$,
$\eta$, $g$ are all $\tilde{\nabla}$-parallel and whose torsion
satisfies $\tilde{T}\left(Z,Z'\right)=2\Phi\left(Z,Z'\right)\xi$
for all $Z,Z'\in\Gamma\left(\cal D\right)$ and
$\tilde{T}\left(\xi,\phi V\right)=-\phi
\tilde{T}\left(\xi,V\right)$ for all $V\in\Gamma\left(TM\right)$.
In view of this remark and the mentioned theorem of Boeckx that
any contact metric $\left(\kappa,\mu\right)$-space is a strongly
pseudo-convex CR-manifold, one can see that the connection
mentioned in Theorem \ref{principale} plays the same role for
contact metric $\left(\kappa,\mu\right)$-space that the
Tanaka-Webster connection has for CR-manifolds. As we shall see,
the connection $\bar{\nabla}$ uniquely determines a contact metric
$\left(\kappa,\mu\right)$-space modulo $\cal D$-homothetic
deformations and it reveals very useful in the study of this kind
of contact metric manifolds.

\section{Preliminaries}

\subsection{Contact manifolds}

An \emph{almost contact metric manifold} is a
$\left(2n+1\right)$-dimensional Riemannian manifold
$\left(M,g\right)$ which admits a tensor field $\phi$ of type
$(1,1)$, a global $1$-form $\eta$ and a global vector field $\xi$,
called \emph{Reeb vector field}, satisfying
\begin{equation}\label{contattometrica}
\eta\left(\xi\right)=1, \ \ \phi^2 V = -V+\eta\left(V\right)\xi, \ \
g\left(\phi V,\phi
W\right)=g\left(V,W\right)-\eta\left(V\right)\eta\left(W\right),
\end{equation}
for all vector fields $V$ and $W$ on $M$. Given an almost contact
metric manifold one can define a $2$-form $\Phi$, called the
\emph{fundamental  $2$-form} of the structure, by
$\Phi\left(V,W\right)=g\left(V,\phi W\right)$. Then we say that
$\left(M,\phi,\xi,\eta,g\right)$ is a \emph{contact metric
manifold} if the additional property $d\eta=\Phi$ holds. From
\eqref{contattometrica} it can be proven that (cf. \cite{blair0})
\begin{description}
    \item[(i)] $\phi\xi=0$, $\eta\circ\phi=0$,
    \item[(ii)] $\nabla_{\xi}\phi=0$ and $\nabla_{\xi}\xi=0$,
    \item[(iii)] $\phi|_{\cal D}$ is an isomorphism,
\end{description}
where $\nabla$ denotes the Levi Civita connection and  $\cal
D=\ker\left(\eta\right)$ is the $2n$-dimensional distribution
orthogonal to $\xi$ and called the \emph{contact distribution}. It is also easy to prove that for any $X\in\Gamma\left(\cal D\right)$ the bracket $\left[X,\xi\right]$ still belongs to $\cal D$.

In any contact metric manifold the $1$-from $\eta$ satisfies the
relation
\begin{equation}\label{contactform}
\eta\wedge\left(d\eta\right)^n\neq 0
\end{equation}
everywhere on $M$. Any $\left(2n+1\right)$-dimensional smooth
manifold which carries a global $1$-form satisfying
\eqref{contactform} is called a \emph{contact manifold}. Thus any
contact metric manifold is a contact manifold. Conversely, it is
well-known that any contact manifold admits a compatible contact
metric structure $\left(\phi,\xi,\eta,g\right)$. It should be
remarked that \eqref{contactform} implies that the contact
distribution $\cal D$ is never integrable.

Given a contact metric manifold, we can define a tensor field $h$
by $h=\frac{1}{2}{\cal L}_{\xi}\phi$, $\cal L$ denoting the Lie
differentiation. It can be shown (cf. \cite{blair0}) that $h$ is a
trace-free, symmetric operator verifying $h\xi=0$, $\phi h=-h\phi$
and
\begin{equation}\label{Kcontatto}
\nabla_{V}\xi=-\phi h V - \phi V
\end{equation}
for all $V\in\Gamma\left(TM\right)$. Moreover $\xi$ is Killing
 if and only if $h$ vanishes identically; in this case
we say that $\left(M,\phi,\xi,\eta,g\right)$ is a
\emph{$K$-contact manifold}.

On a contact metric manifold $M$ one can define an almost complex
structure $J$ on the product manifold $M\times\mathbb{R}$ by
setting $J\left(V,f\frac{d}{dt}\right)=\left(\phi V - f\xi,
\eta\left(V\right)\frac{d}{dt}\right)$, where $V$ is a vector
field tangent to $M$ and $f$ a function on $M\times\mathbb{R}$. If
the almost complex structure $J$ is integrable then
$\left(M,\phi,\xi,\eta,g\right)$ is said to be \emph{Sasakian}. It
is well-known that each of the following conditions characterizes
Sasakian manifolds
\begin{gather}
\left(\nabla_{V}\phi\right)W=g\left(V,W\right)\xi-\eta\left(W\right)V\\
R_{V W}\xi=\eta\left(W\right)V-\eta\left(V\right)W
\label{curvaturasasaki}
\end{gather}
for all vector fields $V$ and $W$ on $M$. A generalization of the
condition \eqref{curvaturasasaki} leads to the notion of
$\left(\kappa,\mu\right)$-manifold. If the curvature tensor field
of a contact metric manifold satisfies \eqref{definizione} for
some real numbers $\kappa$ and $\mu$ we say that $\xi$ belongs to
the
 $\left(\kappa,\mu\right)$-nullity distribution or, simply, that
$\left(M,\phi,\xi,\eta,g\right)$ is a contact metric
$\left(\kappa,\mu\right)$-space. This manifolds were introduced
and deeply studied in \cite{blair1}. Among other things, the
authors proved the following results.

\begin{theo}[\cite{blair1}]\label{teoremagreci}
Let $\left(M,\phi,\xi,\eta,g\right)$ be a contact metric manifold
with $\xi$ belonging to the $\left(\kappa,\mu\right)$-nullity
distribution. Then $\kappa\leq 1$. Moreover, if $\kappa=1$ then
$h=0$ and $\left(M,\phi,\xi,\eta,g\right)$ is a Sasakian manifold;
if $\kappa<1$, the contact metric structure is not Sasakian and
$M$ admits three mutually orthogonal integrable distributions
${\cal D}_0=\mathbb{R}\xi$, ${\cal D}_{\lambda}$ and ${\cal
D}_{-\lambda}$ corresponding to the eigenspaces of $h$, where
$\lambda=\sqrt{1-\kappa}$.
\end{theo}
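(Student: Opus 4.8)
The plan is to reduce the entire statement to the single operator identity $h^2=\left(\kappa-1\right)\phi^2$, from which everything follows by linear algebra on the symmetric operator $h$ together with the curvature hypothesis. First I would exploit \eqref{definizione} with $W=\xi$: for $X\in\Gamma\left(\cal D\right)$ this yields $\ell X=\kappa X+\mu hX$, where $\ell V:=R_{V\xi}\xi$ is the Jacobi operator. I would then combine this with the standard contact-metric identity (a consequence of \eqref{Kcontatto} and property (ii), cf. \cite{blair0}) relating $\ell$ and $h$, which on $\cal D$ reads $\phi\ell\phi-\ell=2\left(h^2+\phi^2\right)$. Feeding $\ell|_{\cal D}=\kappa+\mu h$ into the left-hand side and using $\phi h=-h\phi$ together with $\phi^2=-\mathrm{Id}$ on $\cal D$, the $\mu h$-terms cancel and one is left with $h^2=\left(1-\kappa\right)\mathrm{Id}$ on $\cal D$; since $h\xi=0$, this is exactly $h^2=\left(\kappa-1\right)\phi^2$ on all of $TM$.

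Once this key formula is available, the inequality and the Sasakian case are immediate. Since $h$ is symmetric, $g\left(h^2X,X\right)=\left|hX\right|^2\geq 0$ for every $X$; but $h^2X=\left(1-\kappa\right)X$ on $\cal D$, so $1-\kappa\geq 0$, i.e. $\kappa\leq 1$. If $\kappa=1$ then $h^2=0$, and symmetry forces $\left|hX\right|^2=g\left(h^2X,X\right)=0$, hence $h=0$; substituting $h=0$ and $\kappa=1$ into \eqref{definizione} gives $R_{VW}\xi=\eta\left(W\right)V-\eta\left(V\right)W$, which by \eqref{curvaturasasaki} means that $M$ is Sasakian. For $\kappa<1$ set $\lambda=\sqrt{1-\kappa}>0$; then $h^2=\lambda^2\mathrm{Id}$ on $\cal D$, so the symmetric operator $h$ has eigenvalues $\pm\lambda$ there and $\cal D$ splits orthogonally as ${\cal D}_{\lambda}\oplus{\cal D}_{-\lambda}$ (eigenspaces for distinct eigenvalues of a symmetric operator are orthogonal), while ${\cal D}_0=\mathbb{R}\xi$ is orthogonal to $\cal D$. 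From $\phi h=-h\phi$ one sees that $\phi$ interchanges ${\cal D}_{\lambda}$ and ${\cal D}_{-\lambda}$, so they have equal dimension; and since $\lambda\neq 0$ we have $h\neq 0$, so $M$ is not Sasakian.

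The hard part will be the integrability of ${\cal D}_{\lambda}$ and ${\cal D}_{-\lambda}$ (that of ${\cal D}_0$ being trivial). For $X,Y\in\Gamma\left({\cal D}_{\lambda}\right)$ I would first note $\eta\left(\left[X,Y\right]\right)=-2d\eta\left(X,Y\right)=-2g\left(X,\phi Y\right)=0$, because $\phi Y\in{\cal D}_{-\lambda}$ is orthogonal to $X\in{\cal D}_{\lambda}$; thus $\left[X,Y\right]\in\cal D$. To control the ${\cal D}_{-\lambda}$-component I would differentiate $hY=\lambda Y$ to obtain $\left(\nabla_X h\right)Y=\left(\lambda-h\right)\nabla_X Y$, whence the ${\cal D}_{-\lambda}$-part satisfies $\left(\nabla_X Y\right)_{-\lambda}=\frac{1}{2\lambda}\left(\left(\nabla_X h\right)Y\right)_{-\lambda}$ and therefore $\left(\left[X,Y\right]\right)_{-\lambda}=\frac{1}{2\lambda}\left(\left(\nabla_X h\right)Y-\left(\nabla_Y h\right)X\right)_{-\lambda}$. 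So everything reduces to showing that $\left(\nabla_X h\right)Y-\left(\nabla_Y h\right)X$ is proportional to $\xi$ for $X,Y\in{\cal D}_{\lambda}$.

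This last point is where the real work lies. I would derive, from \eqref{definizione} via the identity $R_{XY}\xi=\left(\nabla_Y\left(\phi+\phi h\right)\right)X-\left(\nabla_X\left(\phi+\phi h\right)\right)Y$ (itself obtained from \eqref{Kcontatto}), the explicit $\left(\kappa,\mu\right)$-formula $\left(\nabla_X\phi\right)Y=g\left(X+hX,Y\right)\xi-\eta\left(Y\right)\left(X+hX\right)$. Granting it, on ${\cal D}_{\lambda}$ one gets $\left(\nabla_X\phi\right)Y=\left(1+\lambda\right)g\left(X,Y\right)\xi$, which is symmetric in $X$ and $Y$, so $\left(\nabla_X\phi\right)Y-\left(\nabla_Y\phi\right)X=0$; feeding this back into the relation above forces $\phi\left(\left(\nabla_X h\right)Y-\left(\nabla_Y h\right)X\right)=0$, i.e. the bracket lies in $\ker\phi=\mathbb{R}\xi$, killing its ${\cal D}_{-\lambda}$-component. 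Hence $\left[X,Y\right]\in{\cal D}_{\lambda}$, and the identical argument with $\lambda$ replaced by $-\lambda$ settles ${\cal D}_{-\lambda}$. I expect the derivation of the $\nabla\phi$ (equivalently $\nabla h$) formula to be the genuine obstacle, since it requires combining the curvature hypothesis with several standard contact-metric identities and the already-established relation $h^2=\left(\kappa-1\right)\phi^2$.
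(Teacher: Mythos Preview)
The paper does not prove this theorem; it is stated with attribution to \cite{blair1} and used as background. So there is no ``paper's own proof'' to compare against, and your proposal should be judged on its own merits as a reconstruction of the argument in \cite{blair1}.

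Your outline is correct and is essentially the route taken in \cite{blair1}. The reduction to $h^{2}=(\kappa-1)\phi^{2}$ via the Jacobi operator $\ell$ and the identity $\phi\ell\phi-\ell=2(h^{2}+\phi^{2})$ is right, and the spectral consequences ($\kappa\le 1$; Sasakian iff $\kappa=1$; orthogonal eigenbundle splitting for $\kappa<1$) follow exactly as you say. For integrability your scheme is also sound: from $R_{XY}\xi=0$ on ${\cal D}_{\lambda}$ and $\nabla_{V}\xi=-(\phi+\phi h)V$ you get $(1+\lambda)\bigl((\nabla_{Y}\phi)X-(\nabla_{X}\phi)Y\bigr)+\phi\bigl((\nabla_{Y}h)X-(\nabla_{X}h)Y\bigr)=0$, and the $\nabla\phi$-formula kills the first bracket, forcing the second into $\ker\phi$.

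The only point to flag is the one you already single out: the derivation of $(\nabla_{X}\phi)Y=g(X+hX,Y)\xi-\eta(Y)(X+hX)$. You should make explicit that this formula is obtained \emph{before} integrability (so there is no circularity): in \cite{blair1} it is derived directly from the curvature hypothesis and the relation $h^{2}=(\kappa-1)\phi^{2}$, independently of any involutivity of ${\cal D}_{\pm\lambda}$. The present paper in fact records this formula separately (the theorem immediately following the one you are proving), again with citation to \cite{blair1}. Once that is in hand, your argument closes.
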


\begin{theo}[\cite{blair1}]
Let $\left(M,\phi,\xi,\eta,g\right)$ be a contact metric manifold
with $\xi$ belonging to the $\left(\kappa,\mu\right)$-nullity
distribution. Then the following relation hold, for any
$X,Y\in\Gamma\left(TM\right)$,
\begin{align*}
(\nabla_X \phi ) Y &= g(X,Y+hY)\xi - \eta (Y)(X+hX),\\
(\nabla_X h ) Y &= ((1-\kappa)g(X, \phi Y)+g(X, \phi hY))\xi +
 \eta(Y)(h(\phi X + \phi h X)) - \mu \phi h Y.
\end{align*}
\end{theo}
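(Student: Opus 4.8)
The plan is to reduce both identities to a single pointwise algebraic relation, namely $h^{2}=(\kappa-1)\phi^{2}$, and then to feed this into the structure equations of the contact metric manifold. First I would establish $h^{2}=(\kappa-1)\phi^{2}$. On any contact metric manifold one has the general curvature identity
\[
R_{\xi X}\xi-\phi R_{\xi\phi X}\xi=2\left(h^{2}+\phi^{2}\right)X,
\]
obtained by differentiating \eqref{Kcontatto} and using $\nabla_{\xi}\xi=0$, $\nabla_{\xi}\phi=0$. Evaluating the left-hand side by means of \eqref{definizione} together with $h\xi=0$, $\phi h=-h\phi$ and $\eta\circ h=0$ collapses it to $2\kappa\phi^{2}X$, whence $h^{2}=(\kappa-1)\phi^{2}$; in particular this recovers the eigenvalue $\lambda=\sqrt{1-\kappa}$ of Theorem \ref{teoremagreci}. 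Specialising the same expansion of $R_{X\xi}\xi$ to the $\xi$-direction then yields $\nabla_{\xi}h=\mu h\phi$, which is exactly the content of both formulas once $X$ or $Y$ equals $\xi$ (this also handles the cases $(\nabla_{X}\phi)\xi$ and $(\nabla_{X}h)\xi$, which follow directly from \eqref{contattometrica} and \eqref{Kcontatto}).

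Next I would prove the formula for $\nabla\phi$. Here I would invoke the general expression, valid on every contact metric manifold (so that $d\Phi=0$ because $\Phi=d\eta$, and $N^{(2)}=0$), of $(\nabla_{X}\phi)Y$ in terms of the Nijenhuis tensor $N^{(1)}$ of $\phi$,
\[
2g\left((\nabla_{X}\phi)Y,Z\right)=g\left(N^{(1)}(Y,Z),\phi X\right)+2\,d\eta(\phi Y,X)\eta(Z)-2\,d\eta(\phi Z,X)\eta(Y).
\]
The decisive point is to show that $N^{(1)}(Y,Z)$ is proportional to $\xi$ whenever $Y,Z\in\Gamma(\mathcal{D})$, with $g(N^{(1)}(Y,Z),\phi X)=2g(hX,Y)\eta(Z)-2g(hX,Z)\eta(Y)$; substituting this and $d\eta=\Phi$ into the displayed formula makes it collapse precisely to $(\nabla_{X}\phi)Y=g(X,Y+hY)\xi-\eta(Y)(X+hX)$. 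I expect this to be the main obstacle, since controlling the $\mathcal{D}$-component of $N^{(1)}$ is exactly where the $(\kappa,\mu)$-hypothesis must enter beyond the algebraic relation $h^{2}=(\kappa-1)\phi^{2}$; I would handle it through the integrability of the eigendistributions $\mathcal{D}_{\lambda}$, $\mathcal{D}_{-\lambda}$ granted by Theorem \ref{teoremagreci}, together with $\phi\mathcal{D}_{\lambda}=\mathcal{D}_{-\lambda}$, which forces the non-$\xi$ part of $N^{(1)}$ on $\mathcal{D}$ to vanish.

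Finally, for $\nabla h$ I would start from the identity obtained by expanding $R_{XY}\xi=\nabla_{X}\nabla_{Y}\xi-\nabla_{Y}\nabla_{X}\xi-\nabla_{[X,Y]}\xi$ with the help of \eqref{Kcontatto}, in which all Levi-Civita terms cancel:
\[
R_{XY}\xi=(\nabla_{Y}\phi)X-(\nabla_{X}\phi)Y+(\nabla_{Y}\phi)(hX)-(\nabla_{X}\phi)(hY)+\phi(\nabla_{Y}h)X-\phi(\nabla_{X}h)Y.
\]
Substituting the $\nabla\phi$-formula just proved, the four $\nabla\phi$-terms simplify to $\eta(Y)(X+hX)-\eta(X)(Y+hY)$ by symmetry of $h$, and comparison with \eqref{definizione} then isolates the antisymmetric part $\phi(\nabla_{X}h)Y-\phi(\nabla_{Y}h)X$. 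Combining this with the symmetry of the operator $\nabla_{X}h$ (from symmetry of $h$ and $\nabla g=0$) and with the relation gotten by differentiating $h^{2}=(\kappa-1)\phi^{2}$, a Koszul-type manipulation determines $(\nabla_{X}h)Y$ completely and produces the stated formula. Once the $\nabla\phi$-identity and $h^{2}=(\kappa-1)\phi^{2}$ are in hand, the remaining work is routine tensor algebra, so the whole difficulty is concentrated in the $N^{(1)}$-computation of the second paragraph.
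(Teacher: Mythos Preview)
The paper does not supply a proof of this theorem; both this statement and Theorem~\ref{teoremagreci} are quoted from \cite{blair1} as background, with no argument given. There is therefore nothing in the present paper to compare your attempt against.

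On the substance of your outline: the derivation of $h^{2}=(\kappa-1)\phi^{2}$ and of $\nabla_{\xi}h=\mu h\phi$ is standard and correct, and your plan for $\nabla h$ (expand $R_{XY}\xi$ via \eqref{Kcontatto}, feed in the $\nabla\phi$-formula, solve for $(\nabla_{X}h)Y$) is essentially how the argument in \cite{blair1} runs. The step you flag as the main obstacle is, however, a genuine gap as stated. Integrability of $\mathcal{D}_{\lambda}$ and $\mathcal{D}_{-\lambda}$ together with $\phi\mathcal{D}_{\lambda}=\mathcal{D}_{-\lambda}$ does \emph{not} by itself force the $\mathcal{D}$-component of $N^{(1)}$ to vanish: for $X,X'\in\Gamma(\mathcal{D}_{\lambda})$ the cross-brackets $[\phi X,X']$ and $[X,\phi X']$ have $\mathcal{D}_{\lambda}$- and $\mathcal{D}_{-\lambda}$-components that integrability of the eigenbundles does not control, and these survive in $[\phi,\phi](X,X')$. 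Equivalently, a contact metric manifold with two orthogonal Legendrian foliations exchanged by $\phi$ need not be CR-integrable; this is precisely why Theorem~\ref{bilegendrian4} of the present paper requires the extra hypothesis $\bar{\nabla}\phi=0$, not just the existence of the bi-Legendrian structure, to conclude the $(\kappa,\mu)$-condition. In \cite{blair1} the $\nabla\phi$-formula is obtained instead by combining the general contact-metric identity $(\nabla_{X}\phi)Y-(\nabla_{\phi X}\phi)\phi Y=2g(X,Y)\xi-\eta(Y)\bigl(X+hX+\eta(X)\xi\bigr)$ with a further computation that invokes \eqref{definizione} directly, rather than routing the argument through $N^{(1)}$ and the eigenfoliations.
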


Blair, Kouforgiorgos and Papantoniou proven also that the
$\left(\kappa,\mu\right)$-nullity condition remains unchanged
under $\cal D$-homothetic deformations. The concept of $\mathcal
D$-homothetic deformation for a contact metric manifold
$(M,\phi,\xi,\eta,g)$ has been introduced by S. Tanno in
\cite{tanno0} and then intensively studied by many authors. We
recall that, given a real positive number $a$, by a $\mathcal
D$-homothetic deformation of constant $a$ we mean a change of the
structure tensors in the following way:
\begin{equation}
\label{trans} \tilde \phi= \phi, \ \ \ \tilde \eta= a \eta, \ \ \
\tilde \xi= \frac 1a \xi, \ \ \ \tilde g= ag +a(a-1)  \eta \otimes
\eta.
\end{equation}
In \cite{blair1} the authors proven that if $M$ is a contact
metric manifold whose Reeb vector field  belongs to the $(\kappa,
\mu)$-nullity distibution then for the contact metric manifold
$(M,\tilde \phi,\tilde \xi,\tilde \eta,\tilde g)$ the same
property holds. Precisely $\tilde \xi$ belongs to the $(\tilde
\kappa ,\tilde \mu)$-nullity distribution where
$$
\tilde \kappa = \frac {\kappa + a^2 - 1}{a^2}, \ \ \ \ \tilde \mu =
\frac {\mu + 2a - 2} a.
$$

\subsection{Legendrian foliations}

A \emph{Legendrian distribution} on a contact manifold
$(M^{2n+1},\eta)$ is defined by an $n$-dimensional subbundle $L$
of the contact distribution such that $d\eta\left(X,X'\right)=0$
for all $X,X'\in\Gamma\left(L\right)$. When $L$ is integrable, it
defines a \emph{Legendrian foliation} of $(M^{2n+1},\eta)$.
Legendrian foliations have been extensively investigated in recent
years from various points of views (cf. \cite{pang},
\cite{libermann}, \cite{jayne1}, \cite{mino3}, etc.). In
particular Pang provided a classification of Legendrian foliations
by means of a bilinear symmetric form $\Pi_{\cal F}$ on the
tangent bundle of the foliation, defined by $\Pi_{\cal
F}\left(X,X'\right)=-\left({\cal L}_{X}{\cal
L}_{X'}\eta\right)\left(\xi\right)=-\eta\left(\left[X',\left[X,\xi\right]\right]\right)$.
He called a Legendrian foliation $\cal F$  \emph{non-degenerate},
\emph{degenerate} or \emph{flat} according to the circumstance
that the bilinear form $\Pi_{\cal F}$ is non-degenerate,
degenerate or vanishes identically, respectively. In terms of an
associated metric $g$, $\Pi_{\cal F}$ is given by
\begin{equation}\label{invariantep}
\Pi_{\cal F}\left(X,X'\right)=2g\left(\left[\xi,X\right],\phi X'\right).
\end{equation}
The last formula provides a geometrical interpretation of this classification:

\begin{lem}[\cite{jayne1}]\label{classificazione}
Let $\left(M,\phi,\xi,\eta,g\right)$ be a contact metric manifold
and let $\cal F$ be a foliation on it. Then
\begin{description}
    \item[(i)] $\cal F$ is flat if and only if
    $\left[\xi,X\right]\in\Gamma\left(T{\cal F}\right)$ for all
    $X\in\Gamma\left(T{\cal F}\right)$,
    \item[(ii)] $\cal F$ is degenerate if and only if there exist
    $X\in\Gamma\left(T{\cal F}\right)$ such that $\left[\xi,X\right]\in\Gamma\left(T{\cal F}\right)$,
    \item[(iii)] $\cal F$ is non-degenerate if and only if
    $\left[\xi,X\right]\notin \Gamma\left(T{\cal F}\right)$ for all
    $X\in\Gamma\left(T{\cal F}\right)$.
\end{description}
\end{lem}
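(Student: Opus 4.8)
The plan is to prove the classification lemma by computing $\Pi_{\cal F}$ explicitly through the metric formula \eqref{invariantep} and then reading off the three cases from the sign or vanishing of the relevant inner products. First I would justify \eqref{invariantep} itself: starting from $\Pi_{\cal F}(X,X')=-\eta([X',[X,\xi]])$, I would use the identity $\eta(\cdot)=g(\cdot,\xi)$ together with the fact that $d\eta=\Phi$ and $\Phi(V,W)=g(V,\phi W)$, rewriting the bracket term via the formula $d\eta(V,W)=\tfrac12(V\eta(W)-W\eta(V)-\eta([V,W]))$ applied to $V=\xi$, $W=[X,\xi]$, and exploiting that along the Legendrian foliation $X,X'$ are sections of $T{\cal F}\subset\cal D$ so that $\eta(X)=\eta(X')=0$. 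This reduces the Pang form to $2g([\xi,X],\phi X')$.

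The key observation is that $\phi$ restricted to $\cal D$ is an isomorphism and, for a Legendrian distribution $L=T{\cal F}$, the image $\phi(L)$ is the complementary Legendrian subbundle; in particular $g(\,\cdot\,,\phi X')$ for $X'$ ranging over $T{\cal F}$ detects precisely the component of a vector transverse to $T{\cal F}$ inside $\cal D$. Thus I would decompose $[\xi,X]$, which by property (iii) of the preliminaries lies in $\cal D$, into its part tangent to $\cal F$ and its part along $\phi(T{\cal F})$. The tangential part is annihilated by $g(\,\cdot\,,\phi X')$ because $\phi(T{\cal F})$ is $g$-orthogonal to $T{\cal F}$ (this follows from the Legendrian condition $d\eta(X,X')=0$ rewritten as $g(X,\phi X')=0$ for $X,X'\in\Gamma(T{\cal F})$), while the transverse part is exactly what $\Pi_{\cal F}$ measures.

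With this in hand the three statements are immediate. For (i), if $[\xi,X]\in\Gamma(T{\cal F})$ for every $X$, then its $\phi(T{\cal F})$-component vanishes, so $\Pi_{\cal F}(X,X')=0$ for all $X'$, giving flatness; conversely if $\Pi_{\cal F}\equiv0$ then $g([\xi,X],\phi X')=0$ for all $X'$, forcing the transverse component of $[\xi,X]$ to vanish, so $[\xi,X]\in\Gamma(T{\cal F})$. Statement (iii) is the contrapositive sharpened to a pointwise-everywhere condition: non-degeneracy of the symmetric form $\Pi_{\cal F}$ means that for each nonzero $X$ there is an $X'$ with $g([\xi,X],\phi X')\neq0$, equivalently the transverse component of $[\xi,X]$ never vanishes, i.e. $[\xi,X]\notin\Gamma(T{\cal F})$ for all $X\neq0$. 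Statement (ii), degeneracy, is the intermediate case where some nonzero $X$ lies in the radical, i.e. $[\xi,X]\in\Gamma(T{\cal F})$ for that particular $X$.

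The main obstacle I anticipate is the bookkeeping in deriving \eqref{invariantep} cleanly: one must be careful with the convention in $d\eta$ (the factor of $2$ in the definition $\Phi=d\eta$ versus the $\tfrac12$ in the coordinate-free formula for $d\eta$) and with verifying that $[\xi,X]$ has no $\xi$-component, which uses $\eta([\xi,X])=2\,d\eta(\xi,X)=2\Phi(\xi,X)=2g(\xi,\phi X)=0$ since $\phi X\in\cal D$. Once the metric expression for $\Pi_{\cal F}$ is established and the orthogonal splitting $\cal D=T{\cal F}\oplus\phi(T{\cal F})$ is recorded, the equivalences in (i)--(iii) follow by elementary linear algebra on the symmetric bilinear form, with no further geometric input required.
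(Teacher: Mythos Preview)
The paper does not actually prove this lemma: it is stated with a citation to \cite{jayne1} and no proof is given (likewise the metric formula \eqref{invariantep} is asserted, not derived). So there is no ``paper's own proof'' to compare against; your proposal fills in what the paper imports from elsewhere.

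Your argument is sound. The derivation of \eqref{invariantep} is correct once you use $\eta([X,\xi])=0$ and the skew-symmetry $g(V,\phi W)=-g(\phi V,W)$; the orthogonal splitting ${\cal D}=T{\cal F}\oplus\phi(T{\cal F})$ is exactly what makes the pairing $X'\mapsto g([\xi,X],\phi X')$ detect the transverse component of $[\xi,X]$, and the three cases then drop out as you describe. Two small points of bookkeeping worth tightening: first, in (ii) and (iii) you should explicitly exclude the zero section (or work pointwise with nonzero vectors), since $[\xi,0]=0\in\Gamma(T{\cal F})$ trivially; second, the lemma as stated in the paper tacitly assumes $\cal F$ is Legendrian (otherwise $\Pi_{\cal F}$ is not defined and the splitting of $\cal D$ fails), which you correctly use but might state at the outset. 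Neither of these is a genuine gap.
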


Given a compatible contact metric structure
$\left(\phi,\xi,\eta,g\right)$ and a Legendrian distribution $L$
on $M$, we may consider the distribution $Q=\phi L$. It can be
proven (cf. \cite{jayne1}) that $Q$ is a Legendrian distribution
on $M$ which in general is not integrable, even if $L$ is; it is
called the \emph{conjugate Legendrian distribution} of $L$, and
the tangent bundle of $M$ splits as the orthogonal sum $TM=L\oplus
Q\oplus\mathbb{R}\xi$. When both $L$ and $Q$ are integrable, they
defines two orthogonal Legendrian foliations $\cal F$ and $\cal G$
on $M$, and the pair $\left(\cal F, \cal G\right)$ is an example
of a \emph{bi-Legendrian structure} on $M$. More in general a
bi-Legendrian structure is a pair of two complementary, not
necessarily orthogonal, Legendrian foliations on $M$.

In \cite{mino2} it has been attached to any contact manifold
$(M^{2n+1},\eta)$ endowed with a pair of two complementary
Legendrian distributions $\left(L,Q\right)$ a linear connection
$\bar{\nabla}$ uniquely determined by the following
properties:
\begin{align}\label{lista}
 \nonumber \textrm{(i)} \ \ &\bar{\nabla} L\subset L, \ \bar{\nabla} Q\subset Q, \ \bar{\nabla}\left(\mathbb{R}\xi\right)\subset\mathbb{R}\xi,\\
  \textrm{(ii)} \ \ &\bar{\nabla} d\eta=0,\\
 \nonumber \textrm{(iii)} \ \
  &\bar{T}\left(X,Y\right)=2d\eta\left(X,Y\right){\xi}, \textrm{ for all }
  X\in\Gamma\left(L\right), Y\in\Gamma\left(Q\right),\\
 \nonumber &\bar{T}\left(V,\xi\right)=\left[\xi,V_L\right]_Q+\left[\xi,V_Q\right]_L,
  \textrm{ for all } V\in\Gamma\left(TM\right),
\end{align}
where $\bar{T}$ denotes the torsion tensor of $\bar{\nabla}$ and
$V_L$ and $V_Q$ the projections of $V$ onto the
subbundles $L$ and $Q$ of $TM$, respectively. Such a connection is called the
\emph{bi-Legendrian connection} associated to the pair
$\left(L,Q\right)$ and it is defined as follows (cf. \cite{mino2}). For all $V\in\Gamma\left(TM\right)$, $X\in\Gamma\left(L\right)$ and
$Y\in\Gamma\left(Q\right)$,
$\bar\nabla_{V}X:=H\left(V_{L},X\right)_{L}+\left[V_{Q},X\right]_{L}+\left[V_{\mathbb R\xi},X\right]_L$,
$\bar\nabla_{V}Y:=H\left(V_{Q},Y\right)_{Q}+\left[V_{L},Y\right]_{Q}+\left[V_{\mathbb R\xi},Y\right]_Q$
and $\bar\nabla\xi=0$,  where $H$ denotes the
operator such that, for all $V,W\in\Gamma\left(TM\right)$,
$H\left(V,W\right)$ is the unique section of $\cal{D}$ satisfying
$i_{H\left(V,W\right)}d\eta|_{\cal{D}}=\left({\cal{L}}_{V}i_{W}d\eta\right)|_{\cal{D}}$. Further properties of this connection are
collected in the following proposition.

\begin{prop}[\cite{mino2}]\label{proprieta}
Let $\left(M,\eta\right)$ be a contact manifold endowed with two
complementary Legendrian distributions $L$ and $Q$ and let
$\bar{\nabla}$ denote the corresponding bi-Legendrian connection.
Then the $1$-form $\eta$ and the vector field $\xi$ are
$\bar{\nabla}$-parallel and the complete expression of the torsion
tensor field is given by
$\bar{T}\left(X,X'\right)=-\left[X,X'\right]_Q$ for all
$X,X'\in\Gamma\left(L\right)$ and
$\bar{T}\left(Y,Y'\right)=-\left[Y,Y'\right]_L$  for all
$Y,Y'\in\Gamma\left(Q\right)$.
\end{prop}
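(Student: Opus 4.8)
The plan is to read off the first two assertions directly from the defining formula for $\bar\nabla$ and then to extract the torsion identities from an antisymmetrization property of the operator $H$. Since $\bar\nabla\xi=0$ is built into the definition, $\xi$ is automatically $\bar\nabla$-parallel. For $\eta$ I would use that the explicit formula forces $\bar\nabla$ to preserve the splitting $TM=L\oplus Q\oplus\mathbb{R}\xi$: for any $V,W$ the components $\bar\nabla_V W_L\in\Gamma\left(L\right)$ and $\bar\nabla_V W_Q\in\Gamma\left(Q\right)$ lie in $\ker\left(\eta\right)$, while $\bar\nabla_V\left(\eta\left(W\right)\xi\right)=\left(V\eta\left(W\right)\right)\xi$ because $\bar\nabla_V\xi=0$. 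Hence $\eta\left(\bar\nabla_V W\right)=V\left(\eta\left(W\right)\right)$, so that $\left(\bar\nabla_V\eta\right)\left(W\right)=V\left(\eta\left(W\right)\right)-\eta\left(\bar\nabla_V W\right)=0$.

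For the torsion, take $X,X'\in\Gamma\left(L\right)$. Since $X_Q=X_{\mathbb{R}\xi}=0$, the defining formula collapses to $\bar\nabla_X X'=H\left(X,X'\right)_L$ and $\bar\nabla_{X'}X=H\left(X',X\right)_L$, whence
\[
\bar T\left(X,X'\right)=H\left(X,X'\right)_L-H\left(X',X\right)_L-\left[X,X'\right].
\]
Because $L$ is Legendrian, $d\eta\left(X,X'\right)=0$, and since $\eta\left(X\right)=\eta\left(X'\right)=0$ this gives $\eta\left(\left[X,X'\right]\right)=0$, i.e. $\left[X,X'\right]\in\Gamma\left(\mathcal D\right)$ and $\left[X,X'\right]_{\mathbb{R}\xi}=0$. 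Everything therefore reduces to the claim $H\left(X,X'\right)_L-H\left(X',X\right)_L=\left[X,X'\right]_L$.

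The crux, and the step I expect to be the main obstacle, is the antisymmetrization identity $H\left(X,X'\right)-H\left(X',X\right)=\left[X,X'\right]$ inside $\mathcal D$. By definition of $H$ this amounts to checking, for every $Z\in\Gamma\left(\mathcal D\right)$, that
\[
\bigl(\mathcal L_X i_{X'}d\eta-\mathcal L_{X'}i_X d\eta\bigr)\left(Z\right)=d\eta\left(\left[X,X'\right],Z\right).
\]
I would expand the left-hand side by the rule $\left(\mathcal L_U\alpha\right)\left(Z\right)=U\left(\alpha\left(Z\right)\right)-\alpha\left(\left[U,Z\right]\right)$ applied to $\alpha=i_{X'}d\eta$ and $\alpha=i_X d\eta$, and then eliminate the second-order terms $X\left(d\eta\left(X',Z\right)\right)-X'\left(d\eta\left(X,Z\right)\right)$ by invoking the closedness $d\left(d\eta\right)=0$, written out on the triple $\left(X,X',Z\right)$ and simplified using $d\eta\left(X,X'\right)=0$. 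The remaining bracket terms then cancel in pairs, leaving exactly $d\eta\left(\left[X,X'\right],Z\right)$. Since $d\eta|_{\mathcal D}$ is non-degenerate (as $\eta$ is a contact form) and both $H\left(X,X'\right)-H\left(X',X\right)$ and $\left[X,X'\right]$ lie in $\mathcal D$, the two sides must coincide as vector fields, which proves the identity.

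Projecting onto $L$ gives $H\left(X,X'\right)_L-H\left(X',X\right)_L=\left[X,X'\right]_L$, and hence
\[
\bar T\left(X,X'\right)=\left[X,X'\right]_L-\left[X,X'\right]=-\left[X,X'\right]_Q,
\]
using $\left[X,X'\right]_{\mathbb{R}\xi}=0$. The formula $\bar T\left(Y,Y'\right)=-\left[Y,Y'\right]_L$ for $Y,Y'\in\Gamma\left(Q\right)$ follows by the identical argument with the roles of $L$ and $Q$ interchanged, which is legitimate since $Q$ is equally Legendrian and the defining formula for $\bar\nabla$ is symmetric under that exchange. Together with the mixed and $\xi$ components already recorded among the defining relations \eqref{lista}, these two identities yield the complete torsion tensor.
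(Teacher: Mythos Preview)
The paper does not actually prove this proposition; it is quoted from \cite{mino2} and stated without argument, so there is no in-paper proof to compare against. Your proof is correct and self-contained: the parallelism of $\xi$ and $\eta$ follows immediately from the explicit formula for $\bar\nabla$, and your antisymmetrization identity $H\left(X,X'\right)-H\left(X',X\right)=\left[X,X'\right]$ is established cleanly via $d\left(d\eta\right)=0$ and the Legendrian condition $d\eta\left(X,X'\right)=0$; the projection step and the symmetric argument for $Q$ are unproblematic. In short, you have supplied a valid proof where the paper provides only a citation.
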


Now consider a contact metric manifold
$\left(M,\phi,\xi,\eta,g\right)$ endowed with two complementary
Legendrian distributions $L$ and $Q$. The definition of the
corresponding bi-Legendrian connection does not involve the
compatible metric $g$, however it makes sense to find conditions
which ensure $\bar{\nabla}$ being a metric connection at least
when $Q$ is orthogonal to $L$. This problem has been solved in
\cite{mino5} where the author proves the following result.

\begin{prop}\label{metrica}
Let $\left(M,\phi,\xi,\eta,g\right)$ be a contact metric manifold
and $L$ be a Legendrian distribution on $M$. Let $Q=\phi L$ be the
conjugate Legendrian distribution of $L$ and $\bar{\nabla}$ the associated
bi-Legendrian connection. Then
the following statements are equivalent:
\begin{description}
    \item[(i)] $\bar{\nabla} g=0$;
    \item[(ii)] $\bar{\nabla}\phi=0$;
    \item[(iii)] $g$ is a bundle-like metric with respect
both to the distribution $L\oplus \mathbb{R}\xi$ and to $Q\oplus
\mathbb{R}\xi$;
    \item[(iv)] $\bar{\nabla}_{X}X'=\left(\phi\left[X,\phi
    X'\right]\right)_L$ for all $X,X'\in\Gamma\left(L\right)$, $\bar{\nabla}_{Y}Y'=\left(\phi\left[Y,\phi
    Y'\right]\right)_Q$ for all $Y,Y'\in\Gamma\left(Q\right)$ and
    the operator $h$ maps the subbundle $L$
    onto $L$ and the subbundle $Q$ onto $Q$.
\end{description}
Furthermore, assuming $L$ and $Q$ integrable, (i)--(iv) are
equivalent to the total geodesicity of the Legendrian foliations
defined by $L$ and $Q$
\end{prop}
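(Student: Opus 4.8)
The plan is to establish the four conditions equivalent by exploiting, in turn, the parallel tensors carried by $\bar\nabla$, the explicit shape of $\bar\nabla$ on $L$, $Q$ and $\mathbb{R}\xi$, and the infinitesimal (Reinhart) criterion for bundle-like metrics; the total geodesicity clause will then fall out of the same computations. I would first settle (i) $\Leftrightarrow$ (ii) purely formally. By Proposition \ref{proprieta} both $\eta$ and $\xi$ are $\bar\nabla$-parallel, and by the defining property \eqref{lista} one has $\bar\nabla d\eta = 0$. Since the structure is contact metric, $d\eta = \Phi$, hence $d\eta(V,W) = g(V,\phi W)$ and, replacing $W$ by $\phi W$, the algebraic identity $g = \eta\otimes\eta - d\eta(\cdot,\phi\cdot)$. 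Differentiating $d\eta(\cdot,\cdot) = g(\cdot,\phi\cdot)$ with $\bar\nabla$ and using $\bar\nabla d\eta = 0$ gives $g(V,(\bar\nabla_U\phi)W) = -(\bar\nabla_U g)(V,\phi W)$, while differentiating the second identity gives $(\bar\nabla_U g)(V,W) = -d\eta(V,(\bar\nabla_U\phi)W)$. As $g$ and $d\eta$ are nondegenerate on $\mathcal{D}$, each of $\bar\nabla g=0$ and $\bar\nabla\phi=0$ forces the other; this step never uses the explicit formula for $\bar\nabla$.

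Next I would prove (ii) $\Leftrightarrow$ (iv) by splitting $\bar\nabla\phi = 0$ according to the differentiating direction. In the $\xi$-direction $\bar\nabla_\xi X = [\xi,X]_L$ and $\bar\nabla_\xi(\phi X) = [\xi,\phi X]_Q$; since $\mathcal{L}_\xi\phi = 2h$ gives $[\xi,\phi X] = 2hX + \phi[\xi,X]$, the condition $\bar\nabla_\xi(\phi X) = \phi\bar\nabla_\xi X$ collapses to $(hX)_Q = 0$, i.e. $h(L)\subseteq L$ (equivalently $h(Q)\subseteq Q$, as $h\phi=-\phi h$) --- precisely the last clause of (iv). In an $L$-direction $\bar\nabla_Z X = H(Z,X)_L$ and $\bar\nabla_Z(\phi X) = [Z,\phi X]_Q$, so $\bar\nabla\phi=0$ reads $[Z,\phi X]_Q = \phi(\bar\nabla_Z X)$; applying $\phi$ and using $\phi^2=-\mathrm{id}$ on $\mathcal{D}$ produces the closed form displayed in (iv) (up to the sign dictated by the conventions for $H$ and $d\eta$), and the symmetric computation gives the formula on $Q$. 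Reading the implications backwards shows the two formulas together with $h(L)\subseteq L$ return $\bar\nabla\phi=0$.

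Finally I would treat (iii) and the concluding clause together. For a distribution $E$ with orthogonal complement $E^\perp$ the bundle-like condition is $(\mathcal{L}_V g)(Y,Z)=0$ for $V\in\Gamma(E)$, $Y,Z\in\Gamma(E^\perp)$, which the Levi-Civita connection rewrites as the tensorial identity $g(\nabla_Y V,Z)+g(\nabla_Z V,Y)=0$. Taking $E=L\oplus\mathbb{R}\xi$, $E^\perp=Q$ and $V=\xi$, substitution of $\nabla_Y\xi=-\phi hY-\phi Y$ together with the skew-adjointness of $\phi$ and $h\phi=-\phi h$ cancels the non-$h$ terms and leaves $g(\phi hY,Z)=0$, i.e. once more $h(Q)\subseteq Q$; the case $V=X\in\Gamma(L)$ leaves $g(\nabla_Y X,Z)+g(\nabla_Z X,Y)=0$, and the mirror choice $E=Q\oplus\mathbb{R}\xi$ gives the companion condition on $L$. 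Comparing these transverse conditions with (iv) through the Koszul formula identifies them with the vanishing of the second fundamental forms of $L$ and $Q$; when $L,Q$ are integrable this is exactly total geodesicity, since under $h(L)\subseteq L$ one checks $g(\nabla_X X',\xi)=g(X',\phi hX)+g(X',\phi X)=0$, so $\mathrm{II}_L(X,X')=(\nabla_X X')_Q$ and $g(\nabla_X Y,X')+g(\nabla_{X'}Y,X)=-2g(Y,\mathrm{II}_L(X,X'))$.

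The formal equivalences (i) $\Leftrightarrow$ (ii) and (ii) $\Leftrightarrow$ (iv) are essentially tensor bookkeeping, so the main obstacle is the treatment of (iii) and the final clause: one must justify the infinitesimal bundle-like criterion for the (a priori non-integrable) distributions $L\oplus\mathbb{R}\xi$ and $Q\oplus\mathbb{R}\xi$, and then carry out the Koszul comparison between the Levi-Civita connection and the explicit bi-Legendrian connection of (iv), tracking the $L$-, $Q$- and $\mathbb{R}\xi$-components and the interplay of $h$ with $\phi$ throughout. This bracket-by-bracket matching, rather than any single conceptual idea, is where the real work concentrates.
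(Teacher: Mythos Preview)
The paper does not actually prove this proposition: it is quoted in the preliminaries from the external preprint \cite{mino5} (``This problem has been solved in \cite{mino5} where the author proves the following result''), so there is no in-paper argument against which to compare yours.

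That said, your outline is reasonable. The formal equivalence (i)$\Leftrightarrow$(ii) via $\bar\nabla\eta=\bar\nabla d\eta=0$ and $g=\eta\otimes\eta-d\eta(\cdot,\phi\cdot)$ is clean and correct. The case analysis for (ii)$\Leftrightarrow$(iv) is the right idea; note that you should also remark that the mixed cases $(\bar\nabla_X\phi)Y$ with $X\in\Gamma(L)$, $Y\in\Gamma(Q)$ (and vice versa) reduce to the ones you wrote down after substituting $Y=\phi X'$, so that the three cases you list really do exhaust $\bar\nabla\phi=0$. One caution on signs: with the definitions recorded in this paper, $(\bar\nabla_X\phi)X'=0$ gives $[X,\phi X']_Q=\phi\bar\nabla_X X'$, and applying $\phi$ yields $\bar\nabla_X X'=-\phi([X,\phi X']_Q)=-(\phi[X,\phi X'])_L$, which is off by a sign from the displayed formula in (iv); you should track this against the actual definition of $H$ rather than wave it away as a convention.

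For (iii) and the final clause your plan is again correct in spirit, but the acknowledged subtlety is real: ``bundle-like'' is being asserted for the distributions $L\oplus\mathbb{R}\xi$ and $Q\oplus\mathbb{R}\xi$, which are not assumed integrable, so you must either take the Reinhart condition $(\mathcal L_V g)|_{E^\perp\times E^\perp}=0$ as the definition or justify it separately. Once that is fixed, the Koszul comparison you describe does link (iii) to (iv) and, in the integrable case, to total geodesicity of the leaves.
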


By a \emph{bi-Legendrian manifold} we mean a contact manifold endowed with two transversal Legendrian foliations. In particular, in this paper we deal with contact metric manifolds foliated by two mutually orthogonal Legendrian foliations. With regard to this, it will be useful in the sequel to prove the following lemma, which states essentially that in a bi-Legendrian manifold the operator $h$ is deeply linked to the given bi-Legendrian structure. This is just the starting point of our work.

\begin{lem}\label{lemma0}
Let $\cal F$ and $\cal G$ two mutually orthogonal Legendrian foliations on the contact metric manifold $\left(M,\phi,\xi,\eta,g\right)$. Then for all $X,X'\in\Gamma\left(T\cal F\right)$
\begin{equation}\label{equazioneinvarianti}
\Pi_{\cal F}\left(X,X'\right)-\Pi_{\cal G}\left(\phi X,\phi X'\right)=4g\left(hX,X'\right).
\end{equation}
\end{lem}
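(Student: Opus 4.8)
The plan is to compute both sides directly from Pang's metric formula \eqref{invariantep} for the invariant $\Pi$. Writing $L=T{\cal F}$, the orthogonality hypothesis together with the discussion preceding the lemma forces $T{\cal G}=\phi L$, so for $X,X'\in\Gamma(T{\cal F})$ the arguments $\phi X,\phi X'$ lie in $\Gamma(T{\cal G})$ and $\Pi_{\cal G}$ may legitimately be evaluated on them. Applying \eqref{invariantep} to each foliation gives
\begin{equation*}
\Pi_{\cal F}(X,X')=2g([\xi,X],\phi X'),\qquad \Pi_{\cal G}(\phi X,\phi X')=2g([\xi,\phi X],\phi^2 X').
\end{equation*}
Since $X'\in\Gamma({\cal D})$ we have $\eta(X')=0$, so the second identity of \eqref{contattometrica} yields $\phi^2 X'=-X'$, and therefore the left-hand side of \eqref{equazioneinvarianti} collapses to $2g([\xi,X],\phi X')+2g([\xi,\phi X],X')$.

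Next I would expand each Lie bracket via $[\xi,V]=\nabla_\xi V-\nabla_V\xi$ and substitute the fundamental identity \eqref{Kcontatto}, namely $\nabla_V\xi=-\phi hV-\phi V$. This produces, for the first term, the three contributions $g(\nabla_\xi X,\phi X')$, $g(\phi hX,\phi X')$ and $g(\phi X,\phi X')$; the metric compatibility in \eqref{contattometrica} reduces the last two to $g(hX,X')$ and $g(X,X')$ respectively (using $\eta(X)=\eta(X')=0$ and $\eta(hX)=g(hX,\xi)=g(X,h\xi)=0$ by symmetry of $h$ and $h\xi=0$). For the second term I would use $\phi h=-h\phi$ to rewrite $\phi h(\phi X)=hX$ and $\phi^2 X=-X$, obtaining the three contributions $g(\nabla_\xi(\phi X),X')$, $g(hX,X')$ and $-g(X,X')$. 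Summing, the $\pm g(X,X')$ terms cancel and two copies of $g(hX,X')$ survive, leaving a residual $\nabla_\xi$-part $g(\nabla_\xi X,\phi X')+g(\nabla_\xi(\phi X),X')$.

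The crux of the argument is showing that this residual $\nabla_\xi$-part vanishes. Here I would invoke property (ii), $\nabla_\xi\phi=0$, which gives $\nabla_\xi(\phi X)=\phi(\nabla_\xi X)$, and then the skew-symmetry of $\phi$ with respect to $g$ (equivalently $\Phi$ being a $2$-form), so that $g(\phi\nabla_\xi X,X')=-g(\nabla_\xi X,\phi X')$; the two pieces then annihilate each other. This is the step I expect to be the only genuine subtlety, since it is precisely where the contact metric structure, rather than mere formal manipulation, enters. Assembling everything, $g([\xi,X],\phi X')+g([\xi,\phi X],X')=2g(hX,X')$, and multiplying by the factor $2$ from \eqref{invariantep} delivers \eqref{equazioneinvarianti}.
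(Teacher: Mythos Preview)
Your argument is correct, but it takes a longer road than the paper's proof. You share the opening moves---applying \eqref{invariantep} to both $\Pi_{\cal F}$ and $\Pi_{\cal G}$ and using $\phi^2X'=-X'$ to arrive at $2g([\xi,X],\phi X')+2g([\xi,\phi X],X')$---but from there the paper finishes in one line: it moves $\phi$ across the metric in the first summand, $g([\xi,X],\phi X')=-g(\phi[\xi,X],X')$, so that the expression becomes $2g([\xi,\phi X]-\phi[\xi,X],X')$, and then recognises the bracket combination as $({\cal L}_\xi\phi)X=2hX$ directly from the definition of $h$. Your route instead expands each Lie bracket through the Levi--Civita connection, invokes \eqref{Kcontatto}, and then has to cancel the residual $\nabla_\xi$-terms using $\nabla_\xi\phi=0$. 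Everything you do is valid, and it has the minor virtue of making the role of \eqref{Kcontatto} and of $\nabla_\xi\phi=0$ explicit; but the paper's approach is cleaner precisely because it never leaves the Lie-derivative definition of $h$, avoiding the detour through $\nabla$ altogether.
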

\begin{proof}
Since, by the orthogonality between $\cal F$ and $\cal G$ we have $\phi\left(T{\cal F}\right)=T{\cal G}$, using \eqref{invariantep} we have
\begin{align*}
\Pi_{\cal F}\left(X,X'\right)-\Pi_{\cal G}\left(\phi X,\phi X'\right)&=2g\left(\left[\xi,X\right],\phi X'\right)-2g\left(\left[\xi,\phi X\right],\phi^2 X'\right)\\
&=2g\left(\left[\xi,\phi X\right],X'\right)-2g\left(\phi\left[\xi,X\right],X'\right)\\
&=4g\left(hX,X'\right).
\end{align*}
\end{proof}

\begin{cor}\label{corollario1}
If $M$ is K-contact then $\cal F$ and $\cal G$ belong to the same
class according to the above Pang's classification.
\end{cor}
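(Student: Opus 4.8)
The plan is to deduce the corollary directly from Lemma~\ref{lemma0}. On a $K$-contact manifold the defining feature is that the tensor field $h$ vanishes identically, since $\xi$ is Killing precisely when $h=0$. Thus for all $X,X'\in\Gamma\left(T{\cal F}\right)$ the right-hand side of \eqref{equazioneinvarianti} is zero, and the lemma immediately gives
\begin{equation*}
\Pi_{\cal F}\left(X,X'\right)=\Pi_{\cal G}\left(\phi X,\phi X'\right).
\end{equation*}

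First I would observe that, because $\cal F$ and $\cal G$ are mutually orthogonal Legendrian foliations, the map $\phi$ restricts to an isomorphism $T{\cal F}\to T{\cal G}$ (this is exactly the identity $\phi\left(T{\cal F}\right)=T{\cal G}$ already used in the proof of Lemma~\ref{lemma0}). Consequently, as $X,X'$ range over all sections of $T{\cal F}$, the fields $\phi X,\phi X'$ range over all sections of $T{\cal G}$. The displayed equality therefore says that the bilinear form $\Pi_{\cal G}$ on $T{\cal G}$ is obtained from $\Pi_{\cal F}$ on $T{\cal F}$ by transport through the isomorphism $\phi$; in particular the two forms have the same rank and the same degeneracy behaviour.

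It then remains only to translate this equality of forms into Pang's trichotomy. Since $\Pi_{\cal F}$ and $\Pi_{\cal G}$ correspond under the isomorphism $\phi$, one vanishes identically if and only if the other does, is non-degenerate if and only if the other is, and is degenerate (but not identically zero) if and only if the other is. By the definitions recalled before Lemma~\ref{classificazione}, this means $\cal F$ and $\cal G$ fall into the same one of the three classes — flat, non-degenerate, or degenerate — which is the assertion.

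I do not expect any genuine obstacle here: the content is entirely contained in Lemma~\ref{lemma0}, and the only point requiring a word of care is the remark that $\phi$ furnishes a bijective correspondence between sections of $T{\cal F}$ and of $T{\cal G}$, so that an equality holding for all $X,X'\in\Gamma\left(T{\cal F}\right)$ indeed transfers the full form $\Pi_{\cal G}$ and not merely its values on a proper subspace. Given that, the corollary is immediate.
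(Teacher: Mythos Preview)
Your argument is correct and is exactly the intended one: the paper states Corollary~\ref{corollario1} without proof, as an immediate consequence of Lemma~\ref{lemma0}, and your reasoning ($h=0$ forces $\Pi_{\cal F}$ and $\Pi_{\cal G}$ to correspond under the isomorphism $\phi\colon T{\cal F}\to T{\cal G}$, hence share the same degeneracy class) is precisely what that ``immediate'' deduction amounts to.
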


\begin{cor}\label{corollario2}
If $\cal F$ and $\cal G$ are both flat then $M$ is K-contact.
\end{cor}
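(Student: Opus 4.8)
The statement to prove is that the flatness hypotheses force $h=0$, which by the preliminaries is exactly the $K$-contact condition; so the plan is to extract enough vanishing of $h$ from Lemma~\ref{lemma0}. First, recall that a Legendrian foliation is flat precisely when its Pang invariant vanishes identically, so the hypothesis reads $\Pi_{\mathcal F}\equiv 0$ and $\Pi_{\mathcal G}\equiv 0$. Substituting into \eqref{equazioneinvarianti}, for all $X,X'\in\Gamma(T\mathcal F)$ we get $4g(hX,X')=\Pi_{\mathcal F}(X,X')-\Pi_{\mathcal G}(\phi X,\phi X')=0$; hence $g(hX,X')=0$ for all $X,X'\in\Gamma(T\mathcal F)$, i.e. the $T\mathcal F$-component of $hX$ vanishes.

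Next I would locate $h(T\mathcal F)$ precisely. Since $h$ is symmetric and $h\xi=0$, also $g(hX,\xi)=g(X,h\xi)=0$, so $hX\perp\mathbb R\xi$; with the previous step and the orthogonal splitting $TM=T\mathcal F\oplus T\mathcal G\oplus\mathbb R\xi$ this gives $h(T\mathcal F)\subseteq T\mathcal G$. (The same follows from $2h=\mathcal L_\xi\phi$: writing $2hX=[\xi,\phi X]-\phi[\xi,X]$ and using Lemma~\ref{classificazione}(i) for both foliations—flatness means $[\xi,X]\in\Gamma(T\mathcal F)$ and $[\xi,\phi X]\in\Gamma(T\mathcal G)$—both summands lie in $T\mathcal G$.) Dually, $\phi h=-h\phi$ together with $\phi(T\mathcal F)=T\mathcal G$ yields $h(T\mathcal G)\subseteq T\mathcal F$ and $g(hY,Y')=0$ for $Y,Y'\in\Gamma(T\mathcal G)$.

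At this point $h$ is known to interchange $T\mathcal F$ and $T\mathcal G$, and to conclude $h=0$ it remains to prove the \emph{mixed} vanishing $g(hX,\phi X')=0$ for all $X,X'\in\Gamma(T\mathcal F)$ (equivalently $h(T\mathcal F)\perp T\mathcal G$, which with the previous step forces $hX=0$; equivalently $\mathcal L_\xi g=0$). This is the step I expect to be the genuine obstacle, and it is precisely what Lemma~\ref{lemma0} does \emph{not} deliver: the lemma constrains the pairing $g(hX,X')$ but not $g(hX,\phi X')$, and from $2hX=[\xi,\phi X]-\phi[\xi,X]$ flatness only places both terms in $T\mathcal G$ without forcing them equal. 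To close it I would try to use that $\mathcal F,\mathcal G$ are honest foliations, feeding their integrability into $d(d\eta)=0$ and the contact metric structure equations to rewrite the mixed brackets $[X,\phi X']$ through $h$ and so kill $g(hX,\phi X')$.

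I would, however, expect this last input to be essential and delicate rather than routine. When $\dim M=3$ the leaves are one-dimensional, integrability is automatic and empty, and the quantity $g(he_1,\phi e_1)=\beta$ is left entirely free by all the earlier steps: the left-invariant model with $[\xi,e_1]=-\beta e_1$, $[\xi,e_2]=\beta e_2$, $[e_1,e_2]=2\xi$, $\phi e_1=e_2$, is contact metric with both line fields flat yet $h\neq 0$. Thus the mixed term is truly the crux, and I would anticipate needing $n\ge 2$ (so that integrability is informative) or an extra hypothesis such as total geodesicity of the foliations, cf. Proposition~\ref{metrica}, for the conclusion to hold as stated.
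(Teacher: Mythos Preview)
Your analysis is correct and, in fact, uncovers a genuine flaw in the paper rather than in your own reasoning. The paper offers no explicit proof of Corollary~\ref{corollario2}; it is presented as an immediate consequence of Lemma~\ref{lemma0}. But as you rightly observe, setting both $\Pi_{\mathcal F}$ and $\Pi_{\mathcal G}$ to zero in \eqref{equazioneinvarianti} only yields $g(hX,X')=0$ for $X,X'\in\Gamma(T\mathcal F)$ (and, by symmetry, for $T\mathcal G$). This says precisely that $h$ has vanishing diagonal blocks relative to $T\mathcal F\oplus T\mathcal G$, i.e.\ $h(T\mathcal F)\subset T\mathcal G$ and $h(T\mathcal G)\subset T\mathcal F$; the mixed term $g(hX,\phi X')$ is left entirely unconstrained by the lemma, and no further argument is supplied.

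Your three-dimensional left-invariant model with $[\xi,e_1]=-\beta e_1$, $[\xi,e_2]=\beta e_2$, $[e_1,e_2]=2\xi$, $\phi e_1=e_2$ and orthonormal frame $\{\xi,e_1,e_2\}$ is a bona fide counterexample: the Jacobi identity holds, one checks $d\eta=\Phi$ directly, the line fields spanned by $e_1$ and by $e_2$ are mutually orthogonal flat Legendrian foliations (Lemma~\ref{classificazione}(i), since $[\xi,e_i]\in\mathbb R e_i$), yet $he_1=\beta e_2\neq 0$, so the structure is not $K$-contact. Note that the eigenspaces of $h$ here are spanned by $e_1\pm e_2$, a \emph{different} orthogonal bi-Legendrian pair---consistently with the remark in \S3 that for the eigenspace foliations of a $(\kappa,\mu)$-space simultaneous flatness forces $\kappa=1$. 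Thus Corollary~\ref{corollario2} is false in the generality in which it is stated; the most one can extract from Lemma~\ref{lemma0} under the flatness hypotheses is that $h$ interchanges $T\mathcal F$ and $T\mathcal G$.
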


\section{On the  bi-Legendrian  structure  associated to a contact metric $\left(\kappa,\mu\right)$-space}

Let $\left(M,\phi,\xi,\eta,g\right)$ be a contact metric manifold
such that $\xi$ belongs to the $\left(\kappa,\mu\right)$-nullity
distribution. By Theorem \ref{teoremagreci} the orthogonal
distributions $\mathcal{D}_{\lambda}$ and $\mathcal{D}_{-\lambda}$
defined by the eigenspaces of $h$ are involutive and define on
$M$ two orthogonal Legendrian foliations which we denote by $\mathcal{F}_{\lambda}$ and $\mathcal{F}_{-\lambda}$, respectively. In this section we begin the study of the bi-Legendrian manifold $\left(M,\mathcal{F}_{\lambda},\mathcal{F}_{-\lambda}\right)$.

\begin{prop}\label{flatness}
Let $\left(M,\phi,\xi,\eta,g\right)$ be a contact metric
$\left(\kappa,\mu\right)$-space which is not K-contact. Then the Legendrian foliations
$\mathcal{F}_{\lambda}$ and $\mathcal{F}_{-\lambda}$
are either non-degenerate or flat. More precisely,
$\mathcal{F}_{\lambda}$ (respectively, $\mathcal{F}_{-\lambda}$)
is flat if and only if $\kappa +\mu \lambda -\left( \lambda
+1\right) ^{2}=0$ (respectively, $\kappa -\mu \lambda -\left(
\lambda -1\right) ^{2}=0$), otherwise being non-degenerate.
\end{prop}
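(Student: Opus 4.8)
The plan is to use the geometric characterization of Pang's classification provided in Lemma \ref{classificazione} together with the curvature equation \eqref{definizione} defining a $(\kappa,\mu)$-space. The key observation is that flatness of $\mathcal{F}_{\lambda}$ amounts, by Lemma \ref{classificazione}(i), to $[\xi,X]\in\Gamma(\mathcal{D}_{\lambda})$ for all $X\in\Gamma(\mathcal{D}_{\lambda})$, i.e. that the bracket with $\xi$ preserves the eigendistribution. So I would first compute, for $X\in\Gamma(\mathcal{D}_{\lambda})$, how the Lie bracket $[\xi,X]$ distributes among the three eigenspaces $\mathcal{D}_{\lambda}$, $\mathcal{D}_{-\lambda}$ and $\mathbb{R}\xi$. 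Since $X\in\mathcal{D}$ we already know from the preliminaries that $[X,\xi]\in\mathcal{D}$, so the $\mathbb{R}\xi$-component vanishes and the only question is the splitting between $\mathcal{D}_{\lambda}$ and $\mathcal{D}_{-\lambda}$.

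The natural way to detect the $\mathcal{D}_{-\lambda}$-component is to apply the operator $h$ and use the eigenvalue structure. The essential identity to establish is a formula for $h[\xi,X]$, or equivalently for $(\nabla_\xi h)$ acting on eigenvectors. Here the main technical input is the second displayed formula of the Blair--Kouforgiorgos--Papantoniou theorem, giving $(\nabla_X h)Y$ explicitly in terms of $\kappa$, $\mu$, $\phi$ and $h$; combined with \eqref{Kcontatto} this should let me express $[\xi,X]=\nabla_\xi X-\nabla_X\xi$ and then compute $h[\xi,X]$ in closed form. I expect this to produce, for $X\in\Gamma(\mathcal{D}_{\lambda})$, a relation of the shape $h[\xi,X]=\lambda\,[\xi,X]_{\lambda}-\lambda\,[\xi,X]_{-\lambda}+(\text{lower-order terms})$, from which the $\mathcal{D}_{-\lambda}$-component of $[\xi,X]$ can be read off as a multiple of $X'$ governed precisely by the scalar $\kappa+\mu\lambda-(\lambda+1)^2$. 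The parallel computation on $\mathcal{D}_{-\lambda}$ will yield the factor $\kappa-\mu\lambda-(\lambda-1)^2$.

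Once the $\mathcal{D}_{-\lambda}$-component of $[\xi,X]$ is pinned down as $c_\lambda$ times a nonzero eigenvector, flatness of $\mathcal{F}_{\lambda}$ (i.e. $[\xi,X]\in\Gamma(\mathcal{D}_{\lambda})$ for all $X$) is equivalent to $c_\lambda=0$, which is exactly the stated condition $\kappa+\mu\lambda-(\lambda+1)^2=0$; and when $c_\lambda\neq 0$ the foliation is non-degenerate by Lemma \ref{classificazione}(iii), provided one checks that $[\xi,X]\notin\Gamma(\mathcal{D}_{\lambda})$ holds for \emph{every} nonzero $X$ rather than merely some $X$. This last point is where I would be most careful: ruling out the intermediate degenerate case requires that the map $X\mapsto[\xi,X]_{-\lambda}$ be injective, not just nonzero, so I would verify that the coefficient $c_\lambda$ is a nonzero scalar multiple acting uniformly (independent of the particular $X$), which makes the map a nonzero multiple of $\phi$ restricted to $\mathcal{D}_{\lambda}$ and hence an isomorphism onto $\mathcal{D}_{-\lambda}$.

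The main obstacle is the bookkeeping in the curvature/covariant-derivative computation: correctly tracking the $\xi$-directional covariant derivative of an eigenvector of $h$ and isolating its eigencomponents. The cleanest route is probably to avoid $(\nabla_X h)$ directly and instead use the known commutation relations ($\phi h=-h\phi$, the Sasakian-type formulas, and $\nabla_\xi\phi=0$) to show that $\nabla_\xi$ sends $\mathcal{D}_{\lambda}$ into $\mathcal{D}_{\lambda}$ (since $\xi$ is geodesic and $h$ is $\nabla_\xi$-parallel in the non-Sasakian $(\kappa,\mu)$ case, the eigenvalue $\lambda$ being constant), thereby reducing the computation of $[\xi,X]_{-\lambda}$ entirely to $-(\nabla_X\xi)_{-\lambda}=(\phi hX+\phi X)_{-\lambda}$, which is immediate to evaluate once the behavior of $\phi$ on the eigenspaces is used.
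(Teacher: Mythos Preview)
Your overall strategy---use Lemma \ref{classificazione}, then isolate the $\mathcal{D}_{-\lambda}$-component of $[\xi,X]$ by playing it against $h$---is exactly what the paper does. The paper's execution differs slightly from your first suggestion: rather than invoking the $(\nabla_X h)$-formula, it computes $R_{X\xi}\xi$ in two ways (once from the $(\kappa,\mu)$-nullity condition \eqref{definizione}, giving $(\kappa+\mu\lambda)X$, and once directly from $\nabla_V\xi=-\phi hV-\phi V$), equates, and after applying $\phi$ obtains
\[
2\lambda\,[X,\xi]_{\mathcal{D}_{-\lambda}}=\bigl(\kappa+\mu\lambda-(\lambda+1)^2\bigr)\phi X,
\]
from which the dichotomy flat/non-degenerate follows immediately. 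Your route via $(\nabla h)$ can be made to work as well, and in the end both reduce to the same eigenspace decomposition; the curvature computation is just a bit more self-contained.

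However, your final ``cleanest route'' contains a genuine error. You claim that $\nabla_\xi$ preserves $\mathcal{D}_\lambda$ because ``$h$ is $\nabla_\xi$-parallel in the non-Sasakian $(\kappa,\mu)$ case''. This is false: evaluating the Blair--Koufogiorgos--Papantoniou formula at $X=\xi$ gives $(\nabla_\xi h)Y=-\mu\,\phi hY$ for $Y\in\Gamma(\mathcal{D})$, which is nonzero whenever $\mu\neq 0$. Concretely, for $X\in\Gamma(\mathcal{D}_\lambda)$ one finds $(\nabla_\xi X)_{\mathcal{D}_{-\lambda}}=-\tfrac{\mu}{2}\phi X$, so $\nabla_\xi$ does \emph{not} send $\mathcal{D}_\lambda$ into itself. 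If you had proceeded assuming it did, you would obtain $[\xi,X]_{\mathcal{D}_{-\lambda}}=(\lambda+1)\phi X$, which never vanishes (since $\lambda>0$) and would force $\mathcal{F}_\lambda$ to be non-degenerate for all $(\kappa,\mu)$, contradicting the proposition. So drop that shortcut and stick with the curvature computation (or the honest $(\nabla_\xi h)$ term).
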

\begin{proof}
Let $X\in \Gamma \left(\mathcal{D}_{\lambda}\right)$. Then by
\eqref{definizione} we have
\begin{equation*}
R_{X \xi} \xi=\kappa X+\mu h X =\left( \kappa +\mu \lambda \right)
X.
\end{equation*}
On the other hand, using \eqref{Kcontatto},
\begin{align*}
R_{X \xi} \xi&=-\nabla _{\xi}\nabla _{X}\xi
-\nabla _{\left[ X,\xi\right] }\xi \\
&=\nabla _{\xi }\phi X+\lambda \nabla _{\xi } \phi X +\phi \left[
X,\xi\right] +\phi h \left[ X,\xi\right] \\
&=X-\lambda X-\left[ \phi X,\xi \right] +\lambda X-\lambda
^{2}X-\lambda \left[ \phi X,\xi \right] +\phi \left[ X,\xi%
\right]+\phi  h \left[ X,\xi \right]  \\
&=\left( \lambda +1\right) ^{2}X-\lambda \phi  \left[ X,\xi%
\right] +\phi  h \left[ X,\xi \right] ,
\end{align*}
so that%
\begin{equation*}
\phi  h \left[ X,\xi \right]   =\lambda \phi  \left[ X,\xi \right]
 +( \kappa +\mu \lambda -\left( \lambda +1\right)
^{2}) X
\end{equation*}
hence, applying $\phi$ and taking into account  that $\left[X,\xi\right]\in\Gamma\left(\cal D\right)$,
\begin{equation*}
-h\left[X,\xi\right]=-\lambda\left[X,\xi\right]+(\kappa+\mu\lambda-\left(\lambda+1\right)^2)\phi
X.
\end{equation*}
Decomposing $\left[X,\xi\right]$  in the directions of ${\cal D}_{\lambda}$ and ${\cal
D}_{-\lambda}$ we obtain
\begin{equation*}
-h(\left[X,\xi\right]_{{\cal
D}_{\lambda}}+\left[X,\xi\right]_{{\cal
D}_{-\lambda}})=-\lambda\left[X,\xi\right]+(\kappa+\mu\lambda-\left(\lambda+1\right)^2)\phi
X,
\end{equation*}
from which it follows that
\begin{equation}\label{pi0}
2\lambda\left[X,\xi\right]_{{\cal
D}_{-\lambda}}=(\kappa+\mu\lambda-\left(\lambda+1\right)^2)\phi
X
\end{equation}
and we conclude, according to Lemma \ref{classificazione}, that ${\cal
F}_{\lambda}$ is either flat or non-degenerate. The first case
occurs if and only if
$\kappa+\mu\lambda-\left(\lambda+1\right)^2=0$ and the second when
$\kappa+\mu\lambda-\left(\lambda+1\right)^2\neq 0$. In a similar
way one can prove the analogous results for ${\cal F}_{-\lambda}$.
\end{proof}

\begin{rem}
\emph{From Corollary \ref{corollario1} it follows that  the
bi-Legendrian structure $\left({\cal F}_{\lambda},{\cal
F}_{-\lambda}\right)$ is flat if and only if $\kappa=1$ and hence $M$
is Sasakian. This can be also prove in a direct way observing
that, according to Proposition \ref{flatness},  the functions
$f\left(\kappa,\mu\right)=\kappa+\mu\lambda-\lambda\left(\lambda+1\right)^2=2\left(\kappa-1\right)+\left(\mu-2\right)\sqrt{1-\kappa}$
and
$g\left(\kappa,\mu\right)=\kappa-\mu\lambda-\lambda\left(\lambda-1\right)^2=2\left(\kappa-1\right)+\left(2-\mu\right)\sqrt{1-\kappa}$
both vanish if and only if $\kappa=1$.}
\end{rem}

Proposition \ref{flatness} extends and improves the results
obtained in \cite{jayne2} for contact metric manifolds for which
$\xi$ belongs to the $\kappa$-nullity distribution (cf. \cite{tanno}),
i.e. the Levi Civita connection of $g$ satisfies $R_{V
W}\xi=\kappa\left(\eta\left(W\right)V-\eta\left(V\right)W\right)$. In
\cite{jayne2} the author proven that the bi-Legendrian structure
associated to such  contact metric manifolds is non-degenerate; we
recall that in his proof he used the fact that the non-degenerate
plane sections containing $\xi$ have constant sectional curvature
and this last property does not hold for contact metric
$\left(\kappa,\mu\right)$-spaces, as it is been proven in
\cite{blair1}.

We remark also that from the proof of Proposition \ref{flatness} it follows an explicit expression of the invariants $\Pi_{{\cal
F}_{\lambda}}$ and $\Pi_{{\cal
F}_{-\lambda}}$ of the Legendrian foliations ${\cal
F}_{\lambda}$ and ${\cal
F}_{-\lambda}$. More precisely, from \eqref{pi0} and \eqref{invariantep} one can prove the following proposition.

\begin{prop}
Let $\left(M,\phi,\xi,\eta,g\right)$ be a contact metric
$\left(\kappa,\mu\right)$-space which is not K-contact. Then
the canonical invariants associated
to the Legendrian foliations ${\cal F}_{\lambda}$ and
${\cal F}_{-\lambda}$ are given by
\begin{equation}\label{formuleinvarianti}
\Pi_{{\cal
F}_{\lambda}}=\frac{\left(\lambda+1\right)^2-\kappa-\mu\lambda}{\lambda}g|_{{\cal F}_{\lambda}\times{\cal F}_{\lambda}}
\  \emph{ and } \  \Pi_{{\cal
F}_{-\lambda}}=\frac{-\left(\lambda-1\right)^2+\kappa-\mu\lambda}{\lambda}g|_{{\cal F}_{-\lambda}\times{\cal F}_{-\lambda}},
\end{equation}
respectively.
\end{prop}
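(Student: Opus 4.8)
The plan is to read off the invariants directly from the computation already carried out in the proof of Proposition \ref{flatness}, rather than starting afresh. The key identity is equation \eqref{pi0}, which expresses the $\mathcal{D}_{-\lambda}$-component of the bracket $\left[X,\xi\right]$ in terms of $\phi X$; combined with the metric formula \eqref{invariantep} for Pang's invariant, this should yield the first formula in \eqref{formuleinvarianti} almost immediately. The only subtlety to watch is the sign convention: \eqref{invariantep} is written as $\Pi_{\mathcal{F}}\left(X,X'\right)=2g\left(\left[\xi,X\right],\phi X'\right)$, whereas \eqref{pi0} computes $\left[X,\xi\right]=-\left[\xi,X\right]$, so I must track that minus sign carefully.

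First I would fix $X,X'\in\Gamma\left(\mathcal{D}_{\lambda}\right)$ and start from $\Pi_{\mathcal{F}_{\lambda}}\left(X,X'\right)=2g\left(\left[\xi,X\right],\phi X'\right)=-2g\left(\left[X,\xi\right],\phi X'\right)$. Since $X'\in\Gamma\left(\mathcal{D}_{\lambda}\right)$ we have $\phi X'\in\Gamma\left(\phi\mathcal{D}_{\lambda}\right)=\Gamma\left(\mathcal{D}_{-\lambda}\right)$, because $\phi$ anticommutes with $h$ and hence interchanges the two eigenspaces. Therefore only the $\mathcal{D}_{-\lambda}$-component of $\left[X,\xi\right]$ contributes to the pairing, and I may replace $\left[X,\xi\right]$ by $\left[X,\xi\right]_{\mathcal{D}_{-\lambda}}$. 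Substituting \eqref{pi0} then gives
\begin{equation*}
\Pi_{\mathcal{F}_{\lambda}}\left(X,X'\right)=-2g\left(\frac{\kappa+\mu\lambda-\left(\lambda+1\right)^2}{2\lambda}\phi X,\phi X'\right)=\frac{\left(\lambda+1\right)^2-\kappa-\mu\lambda}{\lambda}g\left(\phi X,\phi X'\right).
\end{equation*}
Finally, since $X,X'\in\Gamma\left(\mathcal{D}_{\lambda}\right)\subset\Gamma\left(\mathcal{D}\right)$ are orthogonal to $\xi$, the compatibility relation \eqref{contattometrica} gives $g\left(\phi X,\phi X'\right)=g\left(X,X'\right)-\eta\left(X\right)\eta\left(X'\right)=g\left(X,X'\right)$, which identifies the result with $\frac{\left(\lambda+1\right)^2-\kappa-\mu\lambda}{\lambda}\,g|_{\mathcal{F}_{\lambda}\times\mathcal{F}_{\lambda}}$, as claimed.

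For $\mathcal{F}_{-\lambda}$ I would run the entirely parallel computation indicated at the end of the proof of Proposition \ref{flatness}: repeating the curvature identity with $X\in\Gamma\left(\mathcal{D}_{-\lambda}\right)$, so that $hX=-\lambda X$, produces the analogue of \eqref{pi0} with $\lambda$ replaced by $-\lambda$, namely $-2\lambda\left[X,\xi\right]_{\mathcal{D}_{\lambda}}=\left(\kappa-\mu\lambda-\left(\lambda-1\right)^2\right)\phi X$, and the same pairing argument then yields the second formula in \eqref{formuleinvarianti}. I do not expect any serious obstacle here; the entire proposition is a bookkeeping consequence of \eqref{pi0} and its counterpart, and the main thing to verify is that the sign in \eqref{invariantep}, the sign flip $\left[\xi,X\right]=-\left[X,\xi\right]$, and the factor $2\lambda$ in \eqref{pi0} combine correctly to give the stated numerators. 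The mildest care is needed in the $\mathcal{F}_{-\lambda}$ case to confirm that the sign of $\lambda$ is handled consistently, which is exactly why the numerator changes from $\left(\lambda+1\right)^2-\kappa-\mu\lambda$ to $-\left(\lambda-1\right)^2+\kappa-\mu\lambda$.
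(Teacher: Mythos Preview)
Your proposal is correct and is precisely the approach the paper indicates: the authors state without further detail that the formulae follow ``from \eqref{pi0} and \eqref{invariantep}'', and you have carried out exactly that substitution, correctly tracking the sign flip $\left[\xi,X\right]=-\left[X,\xi\right]$ and the projection onto the complementary eigenspace. The parallel computation for $\mathcal{F}_{-\lambda}$ via the substitution $\lambda\mapsto-\lambda$ is also right and gives the stated numerator.
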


It should be  remarked that the pair $\left(\Pi_{{\cal
F}_{\lambda}},\Pi_{{\cal
F}_{-\lambda}}\right)$ is an invariant of the contact metric $\left(\kappa,\mu\right)$-space in question up to $\cal D$-homothetic deformations. Indeed let $(\tilde{\phi},\tilde{\xi},\tilde{\eta},\tilde{g})$ be a $\cal D$-homothetic deformation of $\left(\phi,\xi,\eta,g\right)$. Then first of all since $\tilde h =\frac{1}{2} \mathcal L_{\tilde \xi} \tilde \phi= \frac 1a h$ (cf.~\cite{blair1}), the eigenvalues of $\tilde h$
are $\pm \tilde \lambda =\pm \frac 1a \lambda$, apart from $0$. It follows that the eigenspaces $\mathcal D_{\tilde \lambda}$ and $\mathcal D_{- \tilde \lambda}$ coincide with $\mathcal D_{ \lambda}$ and $\mathcal D_{-\lambda}$
respectively. Next, for all $X,X'\in\Gamma\left(\cal D_{\tilde\lambda}\right)=\Gamma\left(\cal D_{\lambda}\right)$ we have $\Pi_{{\cal
F}_{\tilde\lambda}}\left(X,X'\right)=-\tilde\eta([X',[X,\tilde\xi]])=-a\eta\left(\frac{1}{a}\left[X',\left[X,\xi\right]\right]\right)=\Pi_{{\cal
F}_{\lambda}}\left(X,X'\right)$. Analogously one can prove that  $\Pi_{{\cal
F}_{-\tilde\lambda}}=\Pi_{{\cal
F}_{-\lambda}}$. Moreover, it should be observed that the invariant $\Pi_{\cal F}$ of any Legendrian foliation $\cal F$ depend only on the Legendrian foliation and on the contact form $\eta$ and not on the associated metric $g$.
In particular the function
\begin{equation}\label{invarianteboeckx}
\frac{\Pi_{\cal F_\lambda}\left(X,X'\right)+\Pi_{\cal F_{-\lambda}}\left(\phi X,\phi X'\right)}{\Pi_{\cal F_\lambda}\left(X,X'\right)-\Pi_{\cal F_{-\lambda}}\left(\phi X,\phi X'\right)},
\end{equation}
for all $X,X'\in\Gamma\left(\cal D_\lambda\right)$ such that $\Pi_{\cal F_\lambda}\left(X,X'\right)\neq 0$ (or, equivalently, $g\left(X,X'\right)\neq 0$), is an invariant of the bi-Legendrian manifold $M$ up to $\cal D$-homothetic deformations and it does not depend on the vector fields $X,X'\in\Gamma\left({\cal D}_{\lambda}\right)$. Indeed, after a straightforward computation, taking into account Lemma \ref{lemma0}, \eqref{formuleinvarianti} and \eqref{contattometrica}, one can find that \eqref{invarianteboeckx} is a constant and, more precisely, it is given by
\begin{equation*}
\frac{\Pi_{\cal F_\lambda}\left(X,X'\right)+\Pi_{\cal F_{-\lambda}}\left(\phi X,\phi X'\right)}{\Pi_{\cal F_\lambda}\left(X,X'\right)-\Pi_{\cal F_{-\lambda}}\left(\phi X,\phi X'\right)}=\frac{1-\frac{\mu}{2}}{4\sqrt{1-\kappa}}=\frac{1}4{I_M},
\end{equation*}
where $I_M$ is the invariant introduced by Boeckx in \cite{boeckx1} for classifying contact metric $\left(\kappa,\mu\right)$-spaces. In particular if ${\cal F}_\lambda$ (respectively ${\cal F}_{-\lambda}$) is flat then $I_M$ attains the value $4$ (respectively $-4$).  Moreover, we can also give an explicit formula for the constant $\mu$ in terms of Legedrian foliations
\begin{equation}
\mu=\frac{\Pi_{\cal F_\lambda}\left(X,X'\right)}{g\left(h X,X'\right)}=\frac{\Pi_{\cal F_\lambda}\left(X,X'\right)}{\lambda g\left(X,X'\right)}
\end{equation}
for all $X,X'\in\Gamma\left({\cal D_\lambda}\right)$ such that $g\left(X,X'\right)\neq 0$.

\section{An interpretation of contact metric $\left(\kappa,\mu\right)$-spaces}

Let $\left(M,\phi,\xi,\eta,g\right)$ be a contact metric
$\left(\kappa,\mu\right)$-space. We can attach to the
bi-Legendrian structure $\left({\cal F}_{\lambda},{\cal
F}_{-\lambda}\right)$ the corresponding bi-Legendrian connection
$\bar{\nabla}$, that is the unique linear connection on $M$ such
that \eqref{lista} hold. Furthermore we have the following result.

\begin{prop}\label{bilegendrian2}
Let $\left(M,\phi,\xi,\eta,g\right)$ be a contact metric
$\left(\kappa,\mu\right)$-space and let $\bar{\nabla}$ be the
bi-Legendrian connection associated to $M$. Then the tensors
$\phi$, $h$ and $g$ are $\bar{\nabla}$-parallel. Moreover, for the
torsion tensor of $\bar\nabla$ we have
$\bar{T}\left(Z,Z'\right)=2\Phi\left(Z,Z'\right){\xi}$ for  all
$Z,Z'\in\Gamma\left(\mathcal{D}\right)$.
\end{prop}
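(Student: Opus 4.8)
The plan is to verify each parallelism claim separately, exploiting the fact that $\mathcal{F}_\lambda$ and $\mathcal{F}_{-\lambda}$ are mutually orthogonal Legendrian foliations so that Proposition \ref{metrica} applies with $L=\mathcal{D}_\lambda$ and $Q=\mathcal{D}_{-\lambda}=\phi\mathcal{D}_\lambda$. First I would establish $\bar\nabla g=0$ and $\bar\nabla\phi=0$. By Proposition \ref{metrica} these are equivalent, and since $\mathcal{F}_\lambda$ and $\mathcal{F}_{-\lambda}$ are \emph{totally geodesic} (being the integrable eigendistributions of the symmetric operator $h$ on a $(\kappa,\mu)$-space, where the foliations are known to be totally geodesic), condition (iv) of that proposition holds: in particular $h$ maps $\mathcal{D}_\lambda$ onto $\mathcal{D}_\lambda$ and $\mathcal{D}_{-\lambda}$ onto $\mathcal{D}_{-\lambda}$, which is automatic here since these are the very eigenspaces of $h$. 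Thus $\bar\nabla g=0$ and $\bar\nabla\phi=0$ follow at once.

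Next I would prove $\bar\nabla h=0$. The key observation is that $h$ acts as the scalar $\lambda$ on $\mathcal{D}_\lambda$ and as $-\lambda$ on $\mathcal{D}_{-\lambda}$, while $h\xi=0$. Since $\lambda=\sqrt{1-\kappa}$ is a constant and, by property (i) in \eqref{lista}, $\bar\nabla$ preserves each of the three subbundles $\mathcal{D}_\lambda$, $\mathcal{D}_{-\lambda}$, $\mathbb{R}\xi$, I would compute $(\bar\nabla_V h)W$ by splitting $W$ into its components in these subbundles. For $W\in\Gamma(\mathcal{D}_\lambda)$ one has $hW=\lambda W$, so $\bar\nabla_V(hW)=\lambda\bar\nabla_V W=h(\bar\nabla_V W)$ because $\bar\nabla_V W$ again lies in $\mathcal{D}_\lambda$; hence $(\bar\nabla_V h)W=0$. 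The same argument with $-\lambda$ handles $W\in\Gamma(\mathcal{D}_{-\lambda})$, and the case $W=\xi$ is immediate from $h\xi=0$ together with $\bar\nabla\xi=0$ (Proposition \ref{proprieta}). Linearity then gives $\bar\nabla h=0$ on all of $TM$.

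Finally I would identify the torsion on $\mathcal{D}$. From Proposition \ref{proprieta} the torsion satisfies $\bar T(X,X')=-[X,X']_{\mathcal{D}_{-\lambda}}$ for $X,X'\in\Gamma(\mathcal{D}_\lambda)$ and $\bar T(Y,Y')=-[Y,Y']_{\mathcal{D}_\lambda}$ for $Y,Y'\in\Gamma(\mathcal{D}_{-\lambda})$, while the mixed term is $\bar T(X,Y)=2d\eta(X,Y)\xi$ by \eqref{lista}(iii). To reach the clean formula $\bar T(Z,Z')=2\Phi(Z,Z')\xi$ for all $Z,Z'\in\Gamma(\mathcal{D})$, I must show the ``pure'' brackets have no component outside $\mathbb{R}\xi$, i.e. that $[X,X']_{\mathcal{D}_{-\lambda}}=0$ and $[Y,Y']_{\mathcal{D}_\lambda}=0$; this is exactly the \emph{integrability} of $\mathcal{D}_\lambda$ and $\mathcal{D}_{-\lambda}$ guaranteed by Theorem \ref{teoremagreci}. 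Then for $X,X'\in\Gamma(\mathcal{D}_\lambda)$ the relation $d\eta=\Phi$ gives $2d\eta(X,X')=2\Phi(X,X')$, but since $\mathcal{D}_\lambda$ is Legendrian this vanishes, matching $\bar T(X,X')=0$; similarly on $\mathcal{D}_{-\lambda}$. Combining the three cases by bilinearity yields $\bar T(Z,Z')=2d\eta(Z,Z')\xi=2\Phi(Z,Z')\xi$ for all $Z,Z'\in\Gamma(\mathcal{D})$. The main obstacle I anticipate is the torsion computation: one must be careful that the off-diagonal cross terms really assemble into the single expression $2\Phi(Z,Z')\xi$, so the decomposition $Z=Z_\lambda+Z_{-\lambda}$ must be tracked through all four bilinear pieces and reconciled with the Legendrian vanishing of $d\eta$ within each eigendistribution.
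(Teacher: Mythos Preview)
Your proposal is correct and follows essentially the same route as the paper: you invoke the total geodesicity of $\mathcal{F}_\lambda$ and $\mathcal{F}_{-\lambda}$ together with Proposition~\ref{metrica} to obtain $\bar\nabla g=0$ and $\bar\nabla\phi=0$, then use that $\bar\nabla$ preserves the eigenspaces of $h$ (with constant eigenvalues $\pm\lambda$, $0$) to get $\bar\nabla h=0$, and finally deduce the torsion formula from Proposition~\ref{proprieta} and the integrability of the eigendistributions. The paper's proof is organized identically, though it is terser about the torsion step, simply remarking that it ``follows easily'' from Proposition~\ref{proprieta} and integrability.
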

\begin{proof}
A well-known property about  ${\cal F}_{\lambda}$ and ${\cal
F}_{-\lambda}$ is that they are totally geodesic foliations (cf.
\cite{blair1}). Thus applying  Proposition \ref{metrica} we get
$\bar\nabla g=0$ and $\bar\nabla\phi=0$. Next, for all
$V\in\Gamma\left(TM\right)$,
$X\in\Gamma\left(\mathcal{D}_{+}\right)$,
$Y\in\Gamma\left(\mathcal{D}_{-}\right)$, we have
\begin{gather*}
\left(\bar{\nabla}_{V}h\right)X=\bar{\nabla}_V hX-h\bar{\nabla}_{V}X=\bar{\nabla}_V\left(\lambda
X\right)-\lambda\bar{\nabla}_{V}X=0,\\
\left(\bar{\nabla}_{V}h\right)Y=\bar{\nabla}_V hY-h\bar{\nabla}_{V}Y=\bar{\nabla}_V\left(-\lambda
Y\right)+\lambda\bar{\nabla}_{V}Y=0,
\end{gather*}
because $\bar\nabla$ preserves ${\cal F}_{\lambda}$
and ${\cal F}_{-\lambda}$.  Finally, for any $f\in C^{\infty}\left(M\right)$,
\begin{equation*}
\left(\bar{\nabla}_{V}h\right)f\xi=\bar{\nabla}_{V}\left(h\left(f\xi\right)\right)-h\left(\bar{\nabla}_{V}\left(f\xi\right)\right)=-h\left(f\bar{\nabla}_{V}\xi\right)-V\left(f\right)h\xi=0
\end{equation*}
because $\bar{\nabla}\xi=0$ and $h\xi=0$. It remains to prove the
property about the torsion, but it follows easily from Proposition
\ref{proprieta} and from the integrability of ${\cal D}_{\lambda}$ and
${\cal D}_{-\lambda}$.
\end{proof}

\begin{cor}\label{bilegendrian3}
With the assumptions and the notation of Proposition
\ref{bilegendrian2}, the connection $\bar\nabla$ is related to
the Levi Civita connection of $\left(M,\phi,\xi,\eta,g\right)$ by the following formula, for all
$X,Y\in\Gamma\left(\cal D\right)$,
\begin{equation}\label{levicivita}
\bar\nabla_{X}Y=\nabla_{X}Y-\eta\left(\nabla_{X}Y\right)\xi.
\end{equation}
\end{cor}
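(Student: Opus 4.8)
The plan is to compare $\bar\nabla$ with the Levi Civita connection $\nabla$ through their difference tensor, exploiting that both are metric while the torsion of $\bar\nabla$ restricted to $\mathcal D$ takes values in $\mathbb R\xi$. First I would set $A:=\bar\nabla-\nabla$, which is a $(1,2)$-tensor field since the difference of two linear connections is $C^\infty(M)$-bilinear. Because $\bar\nabla g=0$ (Proposition \ref{bilegendrian2}) and $\nabla g=0$, subtracting the two metricity identities yields $g(A(X,Y),Z)+g(Y,A(X,Z))=0$ for all vector fields, i.e. $A(X,\cdot)$ is $g$-skew-symmetric in its last two arguments. Moreover, since $\nabla$ is torsion-free, the torsion of $\bar\nabla$ is $\bar T(X,Y)=A(X,Y)-A(Y,X)$.

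The key algebraic step is the standard fact that a metric connection is uniquely determined by its torsion: combining the two relations above one solves for $A$ and obtains the Koszul-type identity
\begin{equation*}
2g(A(X,Y),Z)=g(\bar T(X,Y),Z)+g(\bar T(Z,X),Y)+g(\bar T(Z,Y),X),
\end{equation*}
valid for all $X,Y,Z\in\Gamma(TM)$. This follows by rewriting the three summands on the right in terms of $a(X,Y,Z):=g(A(X,Y),Z)$ and cancelling, using the skew-symmetry of $a$ in its last two slots.

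Now I would restrict to $X,Y,Z\in\Gamma(\mathcal D)$. By Proposition \ref{bilegendrian2} every torsion evaluation is a multiple of $\xi$, namely $\bar T(\cdot,\cdot)=2\Phi(\cdot,\cdot)\xi$; since $\xi$ is orthogonal to $\mathcal D$ while $X,Y,Z\in\mathcal D$, all three terms on the right-hand side vanish. Hence $g(A(X,Y),Z)=0$ for every $Z\in\Gamma(\mathcal D)$, which says precisely that $A(X,Y)$ has no $\mathcal D$-component, i.e. $(\bar\nabla_X Y)_{\mathcal D}=(\nabla_X Y)_{\mathcal D}$. Finally, property \textrm{(i)} of \eqref{lista} gives $\bar\nabla_X Y\in\Gamma(\mathcal D)$, because $\bar\nabla$ preserves $\mathcal D_{\lambda}$ and $\mathcal D_{-\lambda}$; therefore $\bar\nabla_X Y=(\nabla_X Y)_{\mathcal D}=\nabla_X Y-\eta(\nabla_X Y)\xi$, which is exactly \eqref{levicivita}.

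The only delicate point I expect is the justification of the Koszul-type identity and, in particular, the verification that it may legitimately be applied with all three arguments confined to $\mathcal D$. This is harmless here: the skew-symmetry of $A$ coming from metricity holds for arbitrary vector fields, and each torsion term appearing on the right-hand side is evaluated on a pair of sections of $\mathcal D$, precisely where the formula $\bar T=2\Phi\,\xi$ from Proposition \ref{bilegendrian2} is available. Everything else is a routine decomposition along $TM=\mathcal D\oplus\mathbb R\xi$.
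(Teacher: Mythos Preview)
Your argument is correct and in fact cleaner than the paper's. The paper proceeds case by case according to the splitting $\mathcal D=\mathcal D_{\lambda}\oplus\mathcal D_{-\lambda}$: for $X,Y$ in the same eigendistribution it invokes the total geodesicity of $\mathcal F_{\pm\lambda}$ together with the fact that $\bar\nabla$ is metric and torsion-free along the leaves, so that $\bar\nabla$ and $\nabla$ agree there; for the mixed case $X\in\Gamma(\mathcal D_{\lambda})$, $Y\in\Gamma(\mathcal D_{-\lambda})$ it appeals to the result from \cite{blair1} that $\nabla_X Y\in\Gamma(\mathcal D_{-\lambda}\oplus\mathbb R\xi)$ and then checks $g(\nabla_X Y,Y')=g(\bar\nabla_X Y,Y')$ for $Y'\in\Gamma(\mathcal D_{-\lambda})$ by a direct Koszul computation. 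Your route bypasses all of this: the single Koszul-type identity for the difference tensor $A=\bar\nabla-\nabla$ of two metric connections, combined with the torsion formula $\bar T|_{\mathcal D\times\mathcal D}=2\Phi\,\xi$ from Proposition~\ref{bilegendrian2}, kills the $\mathcal D$-component of $A(X,Y)$ in one stroke, and the $\xi$-component is then fixed by $\bar\nabla_X Y\in\Gamma(\mathcal D)$. The advantage of your approach is that it needs neither the case split nor the external input from \cite{blair1}; the paper's approach, on the other hand, makes the geometric reason (total geodesicity of the eigenfoliations) more visible in the diagonal case.
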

\begin{proof}
Since $\bar\nabla$ is torsion free along the leaves of the
foliations ${\cal F}_{\lambda}$ and ${\cal F}_{-\lambda}$ and, by
Proposition \ref{bilegendrian2}, it is metric, it coincides with
the Levi Civita connection along the leaves of ${\cal
F}_{\lambda}$ and ${\cal F}_{-\lambda}$. Hence \eqref{levicivita}
holds for all $X,Y\in\Gamma\left({\cal D}_{\lambda}\right)$ or
$X,Y\in\Gamma\left({\cal D}_{-\lambda}\right)$ because ${\cal
F}_{\lambda}$ and ${\cal F}_{-\lambda}$ are totally geodesic
foliations. Now let $X\in\Gamma\left({\cal D}_{\lambda}\right)$
and $Y\in\Gamma\left({\cal D}_{-\lambda}\right)$. It is well-known
(cf. \cite{blair1}) that $\nabla_{X}Y\in\Gamma\left({\cal
D}_{-\lambda}\oplus\mathbb{R}\xi\right)$. For all
$Y'\in\Gamma\left({\cal D}_{-\lambda}\right)$, using $\bar\nabla
g=0$, we have
\begin{align*}
2g(\nabla_{X}Y,Y')&=X(g(Y,Y'))+Y(g(X,Y'))-Y'(g(X,Y))+g([X,Y],Y')\\
&\quad+g([Y',X],Y)-g([Y,Y'],X)\\
&=X(g(Y,Y'))+g([X,Y],Y')+g([Y',X],Y)\\
&=X(g(Y,Y'))-g([X,Y']_{{\cal D}_{-\lambda}},Y)+g([X,Y]_{{\cal D}_{-\lambda}},Y')\\
&=2g([X,Y]_{{\cal D}_{-\lambda}},Y')\\
&=2g(\bar{\nabla}_{X}Y,Y'),
\end{align*}
from which it follows that $\bar{\nabla}_{X}Y=\left(\nabla_{X}Y\right)_{{\cal D}_{-\lambda}}$ and hence
\eqref{levicivita}. Analogously one can prove \eqref{levicivita}
for $X\in\Gamma\left({\cal D}_{-\lambda}\right)$ and
$Y\in\Gamma\left({\cal D}_{\lambda}\right)$.
\end{proof}

Now we examine in a certain sense an "inverse" problem. We start
with a bi-Legendrian structure on an arbitrary contact metric
manifold $M$ and we ask whether $M$ is a contact metric
$\left(\kappa,\mu\right)$-space for some
$\kappa,\mu\in\mathbb{R}$.

\begin{theo}\label{bilegendrian4}
Let $\left(M,\phi,\xi,\eta,g\right)$ be a contact metric manifold,
non $K$-contact, endowed with two orthogonal Legendrian foliations
${\cal F}$ and ${\cal G}$ and suppose that the bi-Legendrian
connection corresponding to $\left({\cal F},{\cal G}\right)$
satisfies $\bar{\nabla}\phi=0$ and $\bar{\nabla} h=0$. Then
$\left(M,\phi,\xi,\eta,g\right)$ is a contact metric
$\left(\kappa,\mu\right)$-space. Furthermore, the bi-Legendrian
structure $\left({\cal F}, {\cal G}\right)$ coincides with that
one determined by the eigenspaces of $h$.
\end{theo}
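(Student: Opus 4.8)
The plan is to verify the curvature condition \eqref{definizione} directly and then read off $\kappa$ and $\mu$. I would first observe that, decomposing arbitrary vector fields into their $\mathcal{D}$- and $\xi$-components and using the skew-symmetry of the curvature together with $h\xi=0$, the identity \eqref{definizione} is equivalent to the two pointwise statements: (a) $R_{XY}\xi=0$ for all $X,Y\in\Gamma(\mathcal{D})$, and (b) $R_{X\xi}\xi=\kappa X+\mu hX$ for all $X\in\Gamma(\mathcal{D})$, with $\kappa,\mu$ constant. Since $\bar\nabla\phi=0$, Proposition \ref{metrica} supplies $\bar\nabla g=0$, the total geodesicity of $\mathcal{F}$ and $\mathcal{G}$, and the invariance $h(T\mathcal{F})\subseteq T\mathcal{F}$, $h(T\mathcal{G})\subseteq T\mathcal{G}$; repeating verbatim the argument of Corollary \ref{bilegendrian3} (which uses only these properties) then yields $\bar\nabla_XY=\nabla_XY-\eta(\nabla_XY)\xi$ for all $X,Y\in\Gamma(\mathcal{D})$.

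The key device is the difference tensor $D:=\bar\nabla-\nabla$. From $\bar\nabla\xi=0$ one gets $\bar R_{VW}\xi=0$, and \eqref{Kcontatto} reads $\nabla_V\xi=-\psi V$ with $\psi:=\phi(\mathrm{Id}+h)$, so $D_V\xi=\psi V$ and $\psi\xi=0$. Because $\bar\nabla\phi=0$ and $\bar\nabla h=0$ give $\bar\nabla\psi=0$, I obtain the endomorphism identity $\nabla_V\psi=\psi\circ D_V-D_V\circ\psi$ and hence $R_{VW}\xi=(\nabla_W\psi)V-(\nabla_V\psi)W$. For (a) I plug in $X,Y\in\Gamma(\mathcal{D})$: the relation from the first paragraph makes $D_XY$ a multiple of $\xi$, which $\psi$ annihilates, leaving $R_{XY}\xi=D_X(\psi Y)-D_Y(\psi X)=\big(g(\psi X,\psi Y)-g(\psi Y,\psi X)\big)\xi=0$. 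This settles (a) cleanly.

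For (b) I would compute $R_{X\xi}\xi=(\nabla_\xi\psi)X-(\nabla_X\psi)\xi$. The second term is $\psi^2X=-(X-h^2X)$, while expanding $D_\xi$ in the first term brings in the transverse brackets $[\xi,X]_{T\mathcal{G}}$ and $[\xi,\phi X]_{T\mathcal{F}}$ — precisely the data of Pang's invariants $\Pi_{\mathcal{F}}$ and $\Pi_{\mathcal{G}}$. Using Lemma \ref{lemma0} to combine them, I expect that for an eigenvector $X$ with $hX=\lambda X$ this collapses to $R_{X\xi}\xi=(1+\lambda)^2X+2\lambda\phi A(X)$, where $A(X):=[\xi,X]_{T\mathcal{G}}$; this is exactly the computation in the proof of Proposition \ref{flatness} run backwards. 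The delicate point, and what I expect to be the main obstacle, is to show that the right-hand side is of the form $\kappa X+\mu hX$ with $\kappa,\mu$ honest constants. This forces two rigidity facts: that $h$ has a single eigenvalue $\lambda$ on $T\mathcal{F}$ (so $T\mathcal{F}=\mathcal{D}_\lambda$), and that $A=c\,\phi$ on $T\mathcal{F}$ for a constant $c$, i.e.\ $\Pi_{\mathcal{F}}$ is a constant multiple of $g|_{T\mathcal{F}}$. Here $\bar\nabla h=0$ must be used in full strength: its parallel transport is a $g$-isometry commuting with $h$, whence the spectrum of $h$ is constant, and the operator $A$, being the $\xi$-component of the torsion of $\bar\nabla$, should be pinned down to a scalar by combining the symmetry of $\Pi_{\mathcal{F}}$ with the first Bianchi identity for $\bar\nabla$ and the vanishing $\bar R_{VW}\xi=0$.

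Once (a) and (b) are established, the eigenvector identity together with its $-\lambda$ counterpart on $T\mathcal{G}$ gives two linear relations fixing the constants $\kappa$ and $\mu$, so \eqref{definizione} holds and $\left(M,\phi,\xi,\eta,g\right)$ is a contact metric $\left(\kappa,\mu\right)$-space. Finally, having shown $h|_{T\mathcal{F}}=\lambda\,\mathrm{Id}$ and $h|_{T\mathcal{G}}=-\lambda\,\mathrm{Id}$ with $T\mathcal{G}=\phi\,T\mathcal{F}$, the foliations $\mathcal{F}$ and $\mathcal{G}$ are precisely the eigenspaces $\mathcal{D}_\lambda$ and $\mathcal{D}_{-\lambda}$ of $h$ (in agreement with Theorem \ref{teoremagreci}), which is the concluding assertion.
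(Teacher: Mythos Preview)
Your route diverges substantially from the paper's. The paper's proof is very short: after establishing \eqref{levicivita1} exactly as you do, it observes that for $X,Y,Z\in\Gamma(\mathcal{D})$ one has
\[
g\bigl((\nabla_X h)Y,Z\bigr)=g\bigl((\bar\nabla_X h)Y,Z\bigr)=0,
\]
so $h$ is $\eta$-parallel, and then invokes \cite[Theorem~4]{boeckx2} to conclude that $M$ is a contact metric $(\kappa,\mu)$-space. The identification $T\mathcal{F}=\mathcal{D}_\lambda$ is done \emph{afterwards}, knowing already that $h^2=(1-\kappa)\,\mathrm{Id}$ on $\mathcal{D}$. Thus the paper outsources the entire ``rigidity'' step you flag as the main obstacle.

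Your direct curvature computation is an interesting self-contained alternative, and your derivation of $R_{XY}\xi=0$ via the difference tensor is clean and correct. However, the proposal has a genuine gap precisely where you anticipate it. You need, for part~(b), that $h|_{T\mathcal{F}}$ is a scalar multiple of the identity and that $\Pi_{\mathcal{F}}$ is a \emph{constant} multiple of $g|_{T\mathcal{F}}$ (equivalently $A=c\,\phi$ on $T\mathcal{F}$ for a constant $c$). Your justification --- that $\bar\nabla$-parallel transport preserves $g$ and $h$, hence the spectrum of $h$ is constant --- only gives constancy of the eigenvalues, not that there is a \emph{single} eigenvalue on $T\mathcal{F}$; and the sentence ``$A$ \ldots\ should be pinned down to a scalar by combining the symmetry of $\Pi_{\mathcal F}$ with the first Bianchi identity for $\bar\nabla$'' is a hope, not an argument. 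This is exactly the content of the Boeckx--Cho theorem the paper cites, and it is not trivial: one needs the full $\eta$-parallelism of $h$ (including the $\xi$-direction) to force $h^2=(1-\kappa)\,\mathrm{Id}$ and to make $\mu$ constant. Without filling this in, your proof is incomplete. Note also that your concluding paragraph presupposes $h|_{T\mathcal{F}}=\lambda\,\mathrm{Id}$, which is part of what needs to be shown.

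If you wish to keep the direct strategy, one viable repair is to mimic the paper on this point: from $\bar\nabla h=0$ and \eqref{levicivita1} deduce $\eta$-parallelism of $h$, then either cite \cite{boeckx2} or reproduce its argument explicitly.
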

\begin{proof}
Firstly we prove that under our assumptions \eqref{levicivita}
holds. Since, by Proposition \ref{metrica}, $\bar{\nabla} g=0$ and $\bar{T}\left(X,X'\right)=0$,
$\bar{T}\left(Y,Y'\right)=0$ for all $X,X'\in\Gamma\left(T\cal
F\right)$ and $Y,Y'\in\Gamma\left(T\cal G\right)$, it follows
immediately that the bi-Legendrian connection and the Levi Civita
connection coincide along the leaves of $\cal F$ and $\cal G$.
Moreover, for all $X\in\Gamma\left(T\cal F\right)$ and
$Y\in\Gamma\left(T\cal G\right)$ $\nabla_{X}Y\in\Gamma\left(T{\cal
G}\oplus\mathbb{R}\xi\right)$ because for all
$X'\in\Gamma\left(T\cal F\right)$
\begin{equation*}
g\left(\nabla_{X}Y,X'\right)=X\left(g\left(Y,X'\right)\right)-g\left(Y,\nabla_{X}X'\right)=0
\end{equation*}
since $\cal F$, as well as $\cal G$, is totally geodesic by
Proposition \ref{metrica}. Then one can argue as in the
proof of Corollary \ref{bilegendrian3} and prove that
\begin{equation}\label{levicivita1}
\nabla_{Z}Z'=\bar{\nabla}_{Z}Z'+\eta\left(\nabla_{Z}Z'\right)\xi
\end{equation}
for all $Z,Z'\in\Gamma\left(\cal D\right)$. Now for all
$X,Y,Z\in\Gamma\left(\cal D\right)$ we have, applying \eqref{levicivita1},
\begin{align*}
g\left(\left(\nabla_{X}h\right)Y,Z\right)&=g\left(\nabla_{X}hY-h\nabla_{X}Y,Z\right)\\
&=g\left(\bar{\nabla}_{X}hY+\eta\left(\nabla_{X}hY\right)\xi-h\bar{\nabla}_{X}Y-\eta\left(\nabla_{X}Y\right)h\xi,Z\right)\\
&=g\left(\left(\bar{\nabla}_{X}h\right)Y,Z\right)+\eta\left(\nabla_{X}hY\right)\eta\left(Z\right)\\
&=g\left(\left(\bar{\nabla}_{X}h\right)Y,Z\right)=0,
\end{align*}
since, by assumption, $\bar{\nabla}h=0$. Thus the tensor field $h$
is $\eta$-parallel and so, by \cite[Theorem 4]{boeckx2},
$\left(M,\phi,\xi,\eta,g\right)$ is a contact metric
$\left(\kappa,\mu\right)$-space. For proving the last part of the
theorem, suppose by absurd that $\cal F$ does not coincide with
both ${\cal F}_{\lambda}$ and ${\cal F}_{-\lambda}$. Let $X$ be a
vector field tangent to $\cal F$ and decompose it as $X=X_+ +
X_-$, with $X_+\in\Gamma\left({\cal D}_{\lambda}\right)$ and
$X_-\in\Gamma\left({\cal D}_{-\lambda}\right)$. Then we have
$hX=h\left(X_+\right) + h\left( X_-\right)=\lambda X_+ - \lambda
X_- = \lambda\left(X_+ - X_-\right)$, from which, since by
Proposition \ref{metrica} $h$ preserves $\cal F$, it follows that
$X_+ - X_- \in \Gamma\left(T\cal F\right)$. On the other hand also
$X_+ + X_- = X \in \Gamma\left(T\cal F\right)$, hence $X_+$ and
$X_-$ are both tangent to $\cal F$ and this is a contradiction.
\end{proof}

From Theorem \ref{bilegendrian4} we get the following
characterization of contact metric $\left(\kappa,
\mu\right)$-spaces. Here, by an abuse of language, we call
Legendrian distribution of an almost contact manifold any
$n$-dimensional subbundle $L$ of the distribution $\cal
D=\ker\left(\eta\right)$ such that $d\eta\left(X,X'\right)=0$ for
all $X,X'\in\Gamma\left(L\right)$ and, as in contact metric
geometry, $2h$ is defined as the Lie differentiation of the tensor
$\phi$ along the Reeb vector field $\xi$.

\begin{theo}\label{main}
Let $\left(M,\phi,\xi,\eta,g\right)$ be an  almost  contact  metric
manifold with $\xi$ non-Killing. Then
$\left(M,\phi,\xi,\eta,g\right)$ is a contact metric $\left(\kappa,
\mu\right)$-space if and only if it admits two orthogonal conjugate
Legendrian distributions $L$ and $Q$ and a linear connection
$\tilde\nabla$ satisfying the following properties:
\begin{description}
    \item[(i)] $\tilde{\nabla}L\subset L$, $\tilde{\nabla}Q\subset Q$,
    \item[(ii)] $\tilde{\nabla}\eta=0$, $\tilde{\nabla}d\eta=0$, $\tilde{\nabla}g=0$, $\tilde{\nabla}h=0$,
    \item[(iii)]
    $\tilde{T}\left(Z,Z'\right)=2\Phi\left(Z,Z'\right){\xi}$ \
    for  all $Z,Z'\in\Gamma\left(\mathcal{D}\right)$,\\
    $\tilde{T}\left(V,\xi\right)=\left[\xi,V_{L}\right]_{Q}+\left[\xi,V_{Q}\right]_{L}$ \
    for all $V\in\Gamma\left(TM\right)$,
\end{description}
where $\tilde T$ denotes the torsion tensor field of
$\tilde\nabla$. Furthermore $\tilde\nabla$ is uniquely determined,
$L$ and $Q$ are integrable and coincide with the eigenspaces of
the operator $h$.
\end{theo}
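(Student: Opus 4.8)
The statement is an equivalence carrying the additional claims of uniqueness of $\tilde\nabla$ and of the identification of $L,Q$ with the eigenspaces of $h$. The plan is to realize $\tilde\nabla$ as the bi-Legendrian connection $\bar\nabla$ of the pair $(L,Q)$ and then to reduce everything to the results already established, chiefly Proposition \ref{proprieta}, Proposition \ref{metrica}, Proposition \ref{bilegendrian2}, and Theorem \ref{bilegendrian4}.

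For the \emph{only if} direction I would take $L=\mathcal{D}_{\lambda}$ and $Q=\mathcal{D}_{-\lambda}$, available because $\xi$ non-Killing forces $h\neq 0$, hence $\kappa<1$, so Theorem \ref{teoremagreci} applies. These distributions are orthogonal (eigenspaces of the symmetric operator $h$) and conjugate (since $\phi h=-h\phi$ interchanges them), and I would set $\tilde\nabla=\bar\nabla$. Properties (i), the relation $\bar\nabla d\eta=0$ of (ii), and the second torsion identity in (iii) are exactly the defining relations \eqref{lista}; the remaining parts of (ii), namely $\bar\nabla\eta=0$, $\bar\nabla g=0$, $\bar\nabla h=0$, together with the first torsion identity $\bar T(Z,Z')=2\Phi(Z,Z')\xi$, are precisely the content of Proposition \ref{proprieta} and Proposition \ref{bilegendrian2}. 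This settles one implication together with the identification of $L,Q$.

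For the \emph{if} direction the crucial first step is to upgrade $M$ to a genuine contact metric manifold, i.e. to prove $d\eta=\Phi$, since the hypotheses only assume an almost contact metric structure. I would obtain this by applying $\eta$ to the torsion identities: as $\tilde\nabla$ preserves $L$ and $Q$ it preserves $\mathcal{D}$, so for $Z,Z'\in\Gamma(\mathcal{D})$ one gets $\eta(\tilde T(Z,Z'))=-\eta([Z,Z'])=2d\eta(Z,Z')$, while condition (iii) gives $\eta(\tilde T(Z,Z'))=2\Phi(Z,Z')$, whence $d\eta=\Phi$ on $\mathcal{D}\times\mathcal{D}$; inserting $\xi$ into the second torsion identity and using $\tilde\nabla\eta=0$ yields $d\eta(\xi,\cdot)=0=\Phi(\xi,\cdot)$, so $d\eta=\Phi$ everywhere and $M$ is contact metric. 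Next I would identify $\tilde\nabla$ with the bi-Legendrian connection $\bar\nabla$ of $(L,Q)$ through the uniqueness in \eqref{lista}: all its defining relations are among the hypotheses once $d\eta=\Phi$ is known, and the only missing one, $\tilde\nabla(\mathbb{R}\xi)\subset\mathbb{R}\xi$, follows from $\tilde\nabla\eta=0$ and $\tilde\nabla g=0$, which together force $\tilde\nabla_V\xi$ to lie in $\mathcal{D}$ and be orthogonal to all of $\mathcal{D}$, hence to vanish; this simultaneously gives the asserted uniqueness of $\tilde\nabla$.

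With $\tilde\nabla=\bar\nabla$ and $\bar\nabla g=0$ in hand, Proposition \ref{metrica} (applicable because $Q=\phi L$) yields $\bar\nabla\phi=0$. Integrability of $L$ and $Q$ then comes from the torsion: for $X,X'\in\Gamma(L)$ one has $\Phi(X,X')=d\eta(X,X')=0$, so condition (iii) gives $\bar T(X,X')=0$, and Proposition \ref{proprieta} forces $[X,X']_Q=0$, i.e. $L$ is involutive (similarly for $Q$). Finally, with $L,Q$ integrable, orthogonal, and $\bar\nabla\phi=0$, $\bar\nabla h=0$, Theorem \ref{bilegendrian4} shows that $M$ is a contact metric $(\kappa,\mu)$-space and that $L,Q$ are the eigenspaces of $h$. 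The main obstacle I anticipate is exactly the opening step of this direction, deriving the contact condition $d\eta=\Phi$ from the torsion data, since every downstream tool (Proposition \ref{metrica}, the bi-Legendrian connection machinery, Theorem \ref{bilegendrian4}) lives on contact metric manifolds and is unavailable until that identity is secured.
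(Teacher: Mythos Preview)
Your proposal is correct and follows essentially the same route as the paper: the forward direction takes $L=\mathcal{D}_\lambda$, $Q=\mathcal{D}_{-\lambda}$, $\tilde\nabla=\bar\nabla$ and reads off (i)--(iii) from \eqref{lista}, Proposition \ref{proprieta} and Proposition \ref{bilegendrian2}; the converse establishes $d\eta=\Phi$, shows $\tilde\nabla\xi=0$ so that $\tilde\nabla$ agrees with the bi-Legendrian connection by its uniqueness, deduces integrability of $L,Q$ from the torsion via Proposition \ref{proprieta}, and concludes with Theorem \ref{bilegendrian4}. Two cosmetic differences: the paper proves $\tilde\nabla\xi=0$ before $d\eta=\Phi$ (and uses it in the $\xi$-direction), whereas you get $d\eta=\Phi$ on $\mathcal{D}\times\mathcal{D}$ directly by applying $\eta$ to the torsion identity---a slightly cleaner one-line argument; and in your integrability step you state only $[X,X']_Q=0$, while the paper also records $\eta([X,X'])=-2d\eta(X,X')=0$ to conclude $[X,X']\in\Gamma(L)$, a detail you should make explicit.
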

\begin{proof}
The proof is rather obvious in one direction, it is sufficient to
take as $\tilde{\nabla}$ the bi-Legendrian connection associated
to the bi-Legendrian structure defined by the eigenspaces of $h$.
Now we prove the converse. Note that by (ii) it follows also that
$\xi$ is parallel with respect to $\tilde\nabla$, because for any
$V\in\Gamma\left(TM\right)$
$(\tilde\nabla_V\eta)\xi=-\eta(\tilde\nabla_V\xi)=0$, so
$\tilde\nabla_V\xi\in\Gamma\left(\cal D\right)$. On the other hand
for any $Z\in\Gamma\left(\cal D\right)$, since $\tilde\nabla$ is a
metric connection and preserves the subbundle ${\cal D} = L \oplus
Q$, we have
\begin{equation*}
g(\tilde\nabla_V\xi,Z)=V\left(g\left(\xi,Z\right)\right)-g(\xi,\tilde\nabla_VZ)=0,
\end{equation*}
from which $\tilde\nabla_V\xi$ is also orthogonal to $\cal D$ hence vanishes. Now we can prove the result. We show first that $d\eta=\Phi$, so $M$ is a contact metric manifold. For any $X,X'\in\Gamma\left(L\right)$ and $Y,Y'\in\Gamma\left(Q\right)$ we have $d\eta\left(X,X'\right)=0=g\left(X,\phi X'\right)$ and $d\eta\left(Y,Y'\right)=0=g\left(Y,\phi Y'\right)$. Moreover
\begin{equation*}
2\Phi\left(X,Y\right)\xi=\tilde{T}\left(X,Y\right)=\tilde\nabla_XY-\tilde\nabla_YX-\left[X,Y\right]
\end{equation*}
from which
\begin{equation}\label{equationeuno}
2\Phi\left(X,Y\right)=g(\tilde\nabla_XY,\xi)-g(\tilde\nabla_YX,\xi)-g\left(\left[X,Y\right],\xi\right).
\end{equation}
Now,
$g(\tilde\nabla_XY,\xi)=X\left(g\left(Y,\xi\right)\right)-g(Y,\tilde\nabla_X\xi)=0$
and, analogously, $g(\tilde\nabla_YX,\xi)=0$, so that
\eqref{equationeuno} becomes
\begin{equation*}
2\Phi\left(X,Y\right)=-\eta\left(\left[X,Y\right]\right),
\end{equation*}
from which it follows that $d\eta\left(X,Y\right)=\Phi\left(X,Y\right)$. For concluding that $\left(M,\phi,\xi,\eta,g\right)$ is a contact metric manifold it remains to check that $d\eta\left(Z,\xi\right)=\Phi\left(Z,\xi\right)$ for any $Z\in\Gamma\left(\cal D\right)$. Indeed $d\eta\left(Z,\xi\right)=-\frac{1}{2}\eta\left(\left[Z,\xi\right]\right)=0=\Phi\left(Z,\xi\right)$ since
\begin{equation*}
\left[Z,\xi\right]=\tilde\nabla_Z\xi-\tilde\nabla_\xi Z-\tilde T\left(Z,\xi\right)=-\tilde\nabla_\xi Z-\left[\xi,Z_L\right]_Q-\left[\xi,Z_Q\right]_L\in\Gamma\left(\cal D\right)
\end{equation*}
because of (i). Therefore $\left(M,\phi,\xi,\eta,g\right)$ is a contact metric manifold endowed with two complementary (in particular orthogonal) Legendrian distributions $L$ and $Q$, and since $\tilde\nabla\xi=0$ the connection $\tilde\nabla$ coincides with the bi-Legendrian connection $\bar\nabla$ associated to $\left(L,Q\right)$. This fact and (iii) imply the integrability of $L$ and $Q$. Indeed for any $X,X'\in\Gamma\left(L\right)$ we have
\begin{equation*}
[X,X']_Q=-\bar T(X,X')=-\tilde T(X,X')=-2d\eta(X,X')\xi=0
\end{equation*}
and
\begin{equation*}
g([X,X'],\xi)=\eta([X,X'])=-2d\eta(X,X')=0,
\end{equation*}
hence $\left[X,X'\right]\in\Gamma\left(L\right)$, and in a similar manner one can prove the integrability of $Q$. Thus $L$ and $Q$ define two orthogonal Legendrian foliations on $M$ and now the result follows from Theorem \ref{bilegendrian4}.
\end{proof}

The connection $\tilde\nabla$ is, under certain points of view, an "invariant" of the contact metric $\left(\kappa,\mu\right)$-space unless $\cal D$-homothetic deformations. Indeed, by a direct computation, one has the following result.

\begin{prop}
The bi-Legendrian connection associated to a contact metric $\left(\kappa,\mu\right)$-space remains unchanged under a $\cal D$-homothetic deformation.
\end{prop}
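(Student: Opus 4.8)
The plan is to exploit the characterization of the bi-Legendrian connection as the \emph{unique} linear connection satisfying the properties \eqref{lista}, and to show that the connection $\bar\nabla$ associated to the original structure already satisfies the analogous properties relative to the deformed structure $(\tilde\phi,\tilde\xi,\tilde\eta,\tilde g)$. By uniqueness this forces $\bar\nabla$ to coincide with the bi-Legendrian connection of the deformed structure.

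First I would record the elementary scaling facts. Since $a$ is a positive constant, $d\tilde\eta=d(a\eta)=a\,d\eta$; moreover $\tilde\xi=\frac1a\xi$ and, as already observed in $\S 3$, $\tilde h=\frac1a h$, so the eigenspaces of $\tilde h$ coincide with those of $h$. Hence the distributions underlying the deformed bi-Legendrian structure are $\tilde L=L=\mathcal D_\lambda$, $\tilde Q=Q=\mathcal D_{-\lambda}$ and $\mathbb R\tilde\xi=\mathbb R\xi$; in particular the splitting $TM=L\oplus Q\oplus\mathbb R\xi$ and the corresponding projections are unchanged. One checks directly that the Reeb projection agrees as an endomorphism of $TM$, since $\tilde\eta(V)\tilde\xi=a\eta(V)\cdot\frac1a\xi=\eta(V)\xi$. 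Finally, $L$ and $Q$ remain Legendrian for $\tilde\eta$, which is immediate from $d\tilde\eta=a\,d\eta$ and the fact that $L,Q\subset\ker\eta=\ker\tilde\eta=\mathcal D$.

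Then I would verify that $\bar\nabla$ fulfils (i)--(iii) of \eqref{lista} for the deformed data. Property (i) is immediate since the three subbundles are unchanged, and $\bar\nabla\tilde\xi=\frac1a\bar\nabla\xi=0$. For (ii), $\bar\nabla d\tilde\eta=a\,\bar\nabla d\eta=0$ as $a$ is constant and $\bar\nabla d\eta=0$. For the torsion conditions (iii) the homothety factor cancels: for $X\in\Gamma(L)$ and $Y\in\Gamma(Q)$ one has $\bar T(X,Y)=2d\eta(X,Y)\xi=2a\,d\eta(X,Y)\cdot\frac1a\xi=2d\tilde\eta(X,Y)\tilde\xi$, and for the Reeb part $\bar T(V,\tilde\xi)=\frac1a\bar T(V,\xi)=\frac1a\bigl([\xi,V_L]_Q+[\xi,V_Q]_L\bigr)=[\tilde\xi,V_{\tilde L}]_{\tilde Q}+[\tilde\xi,V_{\tilde Q}]_{\tilde L}$, where I use that the projections and the Lie brackets are metric-independent. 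By the uniqueness of the bi-Legendrian connection this shows $\bar\nabla$ is precisely the bi-Legendrian connection of the deformed structure.

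There is no serious obstacle here: the only points requiring care are the bookkeeping of how the projections and the two-form $d\eta$ scale under the deformation, and the observation that every bracket appearing in the torsion is independent of the metric, so that the factors $a$ and $\frac1a$ cancel exactly. As an alternative one could run the same verification directly on the explicit formula for $\bar\nabla$ in terms of $H$ and Lie brackets, using that $H$ is unchanged: the relation $i_{\tilde H(V,W)}d\tilde\eta|_{\mathcal D}=(\mathcal L_V i_W d\tilde\eta)|_{\mathcal D}$ reduces, after cancelling the common factor $a$ and invoking the nondegeneracy of $d\eta|_{\mathcal D}$, to the relation defining $H(V,W)$.
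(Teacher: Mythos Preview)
Your argument is correct. The paper itself gives no details beyond the phrase ``by a direct computation,'' so there is no detailed proof to compare against; your verification via the uniqueness characterization \eqref{lista} is a clean way to carry out that computation, and the alternative you mention at the end (checking the explicit formula for $\bar\nabla$ in terms of $H$ and Lie brackets) is presumably what the authors had in mind by ``direct.''
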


The connection stated in Theorem \ref{main} should be compared to the Tanaka-Webster connection of a non-degenerate integrable CR-manifold (cf. \cite{tanaka}, \cite{webster}) and to the generalized Tanaka-Webster connection introduced by Tanno in \cite{tanno2}. This can be seen in the following theorem, where we prove, using Theorem \ref{main}, the already quoted result that any contact metric $\left(\kappa,\mu\right)$-space is a strongly pseudo-convex CR-manifold.

\begin{cor}
Any contact metric $\left(\kappa,\mu\right)$-space is a
strongly pseudo-convex CR-manifold.
\end{cor}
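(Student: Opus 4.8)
The plan is to show that any contact metric $\left(\kappa,\mu\right)$-space satisfies the axioms of a strongly pseudo-convex CR-manifold, namely that there exists a distribution $\mathcal{D}$ of even rank carrying an almost complex structure $\phi|_{\mathcal{D}}$, that this structure is integrable in the CR sense (the integrability condition on the $+i$-eigenbundle), and that the Levi form associated to $d\eta$ is definite on $\mathcal{D}$. First I would identify the CR structure: on a contact metric manifold the contact distribution $\mathcal{D}=\ker\left(\eta\right)$ together with $\phi|_{\mathcal{D}}$ is the natural candidate, since by property (iii) of the preliminaries $\phi|_{\mathcal{D}}$ is an isomorphism squaring to $-\mathrm{id}$, hence an almost complex structure on $\mathcal{D}$. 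The pseudo-convexity (strong nondegeneracy of the Levi form) is immediate from \eqref{contactform}, because $d\eta=\Phi$ is nondegenerate on $\mathcal{D}$ and the associated metric $g$ makes the Levi form $g\left(\cdot,\phi\cdot\right)|_{\mathcal{D}}$ definite; this part requires no hypothesis beyond the contact metric structure.

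The substantive content is the integrability of the CR structure, which is exactly the place where the $\left(\kappa,\mu\right)$-condition enters. The approach I would take is to exploit the connection $\tilde\nabla=\bar\nabla$ furnished by Theorem \ref{main}: by that theorem the bi-Legendrian connection satisfies $\bar\nabla\phi=0$, $\bar\nabla\eta=0$, $\bar\nabla g=0$, $\bar\nabla\xi=0$, and has torsion $\bar{T}\left(Z,Z'\right)=2\Phi\left(Z,Z'\right)\xi$ for all $Z,Z'\in\Gamma\left(\mathcal{D}\right)$ together with the prescribed mixed torsion $\bar{T}\left(V,\xi\right)=\left[\xi,V_L\right]_Q+\left[\xi,V_Q\right]_L$. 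These are precisely the defining properties of the (generalized) Tanaka-Webster connection as characterized by Tanaka and Webster and by Tanno in \cite{tanno2}. Since the existence of a connection with parallel $\phi$, $\eta$, $g$ and with torsion of this specific form is equivalent to the integrability of the underlying CR structure, I would invoke this equivalence to conclude that the CR structure $\left(\mathcal{D},\phi|_{\mathcal{D}}\right)$ is integrable.

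Concretely, the key steps in order are: (1) verify that $\left(\mathcal{D},\phi|_{\mathcal{D}}\right)$ defines an almost CR structure of hypersurface type; (2) invoke Theorem \ref{main} to obtain the connection $\tilde\nabla$ with its parallelism and torsion properties; (3) match these properties against the Tanaka-Webster characterization, so that the torsion condition $\tilde{T}\left(Z,Z'\right)=2\Phi\left(Z,Z'\right)\xi$ and the parallelism of $\phi$ force the vanishing of the CR integrability (Nijenhuis-type) obstruction on $\mathcal{D}$; (4) note that strong pseudo-convexity follows from the nondegeneracy and definiteness of the Levi form $d\eta|_{\mathcal{D}}=\Phi|_{\mathcal{D}}$, which holds on any contact metric manifold by \eqref{contactform} and \eqref{contattometrica}.

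The main obstacle I anticipate is step (3): one must be careful that the torsion and parallelism conditions supplied by Theorem \ref{main} really do coincide with those in the Tanaka-Webster uniqueness theorem, rather than merely resembling them. In particular the second torsion condition in Theorem \ref{main}, $\tilde{T}\left(V,\xi\right)=\left[\xi,V_L\right]_Q+\left[\xi,V_Q\right]_L$, must be shown to be compatible with (indeed, to encode) Tanno's condition $\tilde{T}\left(\xi,\phi V\right)=-\phi\,\tilde{T}\left(\xi,V\right)$, which is the form in which the Tanaka-Webster torsion is usually normalized; reconciling these two descriptions of the mixed torsion is where the genuine verification lies, and it is precisely this matching that certifies $\bar\nabla$ as the generalized Tanaka-Webster connection and hence the structure as integrable.
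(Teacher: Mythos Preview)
Your overall strategy---build the Tanaka--Webster connection out of the bi-Legendrian connection $\bar\nabla$ and invoke the Tanaka--Webster characterization of strongly pseudo-convex CR-structures---is exactly the paper's approach. The difference, and the place where your outline fails, is step~(3): the bi-Legendrian connection itself is \emph{not} the Tanaka--Webster connection, so the ``reconciliation'' you anticipate cannot be carried out as stated.

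Concretely, take $X\in\Gamma\left({\cal D}_{\lambda}\right)$. Then $\bar{T}\left(\xi,X\right)=-\left[\xi,X\right]_{{\cal D}_{-\lambda}}$ and $\bar{T}\left(\xi,\phi X\right)=-\left[\xi,\phi X\right]_{{\cal D}_{\lambda}}$. Using $\left[\xi,\phi X\right]=\phi\left[\xi,X\right]+2hX=\phi\left[\xi,X\right]+2\lambda X$ and the fact that $\phi$ interchanges ${\cal D}_{\lambda}$ and ${\cal D}_{-\lambda}$, the identity $\bar{T}\left(\xi,\phi X\right)=-\phi\,\bar{T}\left(\xi,X\right)$ reduces to $\left[\xi,X\right]_{{\cal D}_{-\lambda}}=\lambda\,\phi X$. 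But formula \eqref{pi0} gives $\left[\xi,X\right]_{{\cal D}_{-\lambda}}=-\frac{\kappa+\mu\lambda-\left(\lambda+1\right)^{2}}{2\lambda}\,\phi X$, and equating the coefficients (with $\kappa=1-\lambda^{2}$) yields $\lambda\left(\mu-2\right)=0$. Hence the Tanno-type torsion condition holds for $\bar\nabla$ only when $\mu=2$; for all other values of $\mu$ the matching you propose is simply false.

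The paper handles this by keeping $\bar\nabla$ in the $\cal D$-directions but \emph{replacing} it in the $\xi$-direction: it sets
\[
\hat{\nabla}_{V}W=
\begin{cases}
\bar\nabla_{V}W, & V\in\Gamma\left(\cal D\right),\\
-\phi h W+\left[\xi,W\right], & V=\xi,
\end{cases}
\]
and then checks that $\hat\nabla$ (not $\bar\nabla$) satisfies the Tanaka--Webster axioms. So your plan is salvageable, but only after this modification; the bi-Legendrian connection supplies the correct data along $\cal D$ (this is what Theorem~\ref{main} and Proposition~\ref{bilegendrian2} buy you), while the $\xi$-direction must be prescribed separately.
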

\begin{proof}
We define a connection on $M$ as follows. We put
\begin{equation*}
\hat{\nabla}_{V}W=\left\{%
\begin{array}{ll}
   \bar\nabla_{V}W, & \hbox{ if $V\in\Gamma\left(\cal D\right)$;} \\
    -\phi h W+\left[\xi,W\right], & \hbox{ if $V=\xi$.} \\
\end{array}%
\right.
\end{equation*}
Then it easy to check that $\hat\nabla$ coincides with the
Tanaka-Webster connection of $M$  and so we get the assertion.
\end{proof}

The above characterization may be also a tool for proving properties on $\left(\kappa,\mu\right)$-spaces. As an application we show in a very simple way that an invariant submanifold of a contact metric metric $\left(\kappa,\mu\right)$-space, that is a submanifold $N$ such that $\phi T_pN \subset T_pN$ for all $p\in N$, is in turn a contact metric $\left(\kappa,\mu\right)$-space (cf. \cite{tripathi}).

\begin{cor}
Any invariant submanifold of a contact metric
$\left(\kappa,\mu\right)$-space is in turn a
$\left(\kappa,\mu\right)$-space.
\end{cor}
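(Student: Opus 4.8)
The plan is to verify the hypotheses of Theorem \ref{main} on $N$ directly, choosing as the pair of orthogonal conjugate Legendrian distributions the intersections of the eigenspaces of $h$ with $TN$, and as the connection $\tilde\nabla$ the restriction to $N$ of the bi-Legendrian connection $\bar\nabla$ of $M$. Throughout I assume $M$ non-Sasakian, i.e. $\kappa<1$ and $h\neq 0$, the Sasakian case being the classical fact that invariant submanifolds of a Sasakian manifold are again Sasakian, hence $\left(1,\mu\right)$-spaces.

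First I would record the induced structure. Since $N$ is an invariant submanifold of dimension $\geq 3$, the vector field $\xi$ is tangent to $N$: if $TN$ were contained in $\mathcal D$ then $\eta|_N\equiv 0$, hence $\left(d\eta\right)|_{TN}=0$; but $\left(d\eta\right)|_{TN}=\Phi|_{TN}\neq 0$, because $\Phi\left(X,\phi X\right)=-g\left(X,X\right)$ for $X\in\Gamma\left(\mathcal D\right)$ and $TN$ is $\phi$-invariant, a contradiction. Thus some tangent vector has $\eta\neq 0$ and $\xi\in T_pN$, so $\left(\phi|_{TN},\xi,\eta|_{TN},g|_{TN}\right)$ is an almost contact metric structure on $N$. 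The key observation is that $h$ leaves $TN$ invariant and restricts to the operator $h_N$ of $N$: writing $2h=\mathcal L_\xi\phi$, for $X\in\Gamma\left(TN\right)$ one has $2hX=\left[\xi,\phi X\right]-\phi\left[\xi,X\right]$, and each term is tangent to $N$ since $\xi$, $X$, $\phi X$ are tangent and the Lie bracket of vector fields tangent to $N$ is again tangent to $N$. Because $h$ has no kernel on $\mathcal D$ when $\kappa<1$ and $\mathcal D\cap TN\neq 0$, we get $h_N\neq 0$, so $\xi$ is non-Killing on $N$.

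Next I would transport the bi-Legendrian data. As $TN$ is $h$-invariant, $\mathcal D\cap TN$ splits as the orthogonal sum of $L:=\mathcal D_\lambda\cap TN$ and $Q:=\mathcal D_{-\lambda}\cap TN$, which are the $\pm\lambda$-eigendistributions of $h_N$; they satisfy $Q=\phi L$ (since $\phi h=-h\phi$ interchanges them bijectively, so they have equal dimension $\frac12\dim\left(\mathcal D\cap TN\right)$) and each is Legendrian for $\eta|_N$, being Legendrian in $M$ of the correct dimension. Hence $L$ and $Q$ are orthogonal conjugate Legendrian distributions of $N$. I would then show that $\bar\nabla$ restricts to $N$, i.e. $\bar\nabla_Z Z'\in\Gamma\left(TN\right)$ whenever $Z,Z'\in\Gamma\left(TN\right)$: in the explicit formula for $\bar\nabla$ every term is built from Lie brackets, from $\phi$, and from the projections onto $\mathcal D_{\pm\lambda}$ and $\mathbb R\xi$, all of which preserve tangency to $N$ (the projection of a tangent vector onto $\mathcal D_\lambda$ lands in $L$ precisely because $\mathcal D\cap TN=L\oplus Q$ and $\eta\left(Z\right)\xi\in TN$). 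The resulting induced connection $\bar\nabla^N$ preserves $L$, $Q$, $\mathbb R\xi$, annihilates $\xi$, parallelizes $d\eta|_N$, and carries the torsion prescribed in \eqref{lista}; by the uniqueness in \eqref{lista} it is exactly the bi-Legendrian connection of $\left(L,Q\right)$ on $N$.

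Finally, since by Proposition \ref{bilegendrian2} one has $\bar\nabla\phi=0$ and $\bar\nabla h=0$ on $M$, and since $\phi|_{TN}$, $h_N$ and $\bar\nabla^N$ are the restrictions of $\phi$, $h$ and $\bar\nabla$, the parallelism passes to $N$, giving $\bar\nabla^N\left(\phi|_{TN}\right)=0$ and $\bar\nabla^N h_N=0$. Thus $N$ is an almost contact metric manifold with $\xi$ non-Killing carrying orthogonal conjugate Legendrian distributions $L,Q$ and a connection $\bar\nabla^N$ meeting all the requirements of Theorem \ref{main}, and that theorem yields that $N$ is a contact metric $\left(\kappa,\mu\right)$-space. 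I expect the only genuinely delicate point to be the tangency of $\bar\nabla$ to $N$; once the $h$-invariance of $TN$ is established this reduces to the bookkeeping that all the bracket-and-projection building blocks of $\bar\nabla$ respect $TN$, while the remaining parallelism and torsion identities are inherited for free from $M$.
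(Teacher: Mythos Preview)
Your proposal is correct and follows essentially the same route as the paper: define $L=T N\cap\mathcal D_{\lambda}$, $Q=T N\cap\mathcal D_{-\lambda}$, observe that the bi-Legendrian connection of $M$ restricts to $N$ and coincides there with the bi-Legendrian connection of $\left(L,Q\right)$, and then invoke Theorem \ref{main}. The paper's version is considerably terser (it cites \cite{blair0} for the inherited contact metric structure and declares the remaining verifications ``easy''), whereas you have spelled out why $\xi$ is tangent to $N$, why $h$ preserves $TN$, and why the building blocks of $\bar\nabla$ respect tangency; these are exactly the details the paper leaves implicit.
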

\begin{proof}
It is well-known (cf. \cite{blair0}) that an invariant submanifold of a contact metric manifold inherits a contact metric structure by restriction. Now let $N^{2m+1}$ be an invariant submanifold of $M^{2n+1}$ and
consider the distribution on $N$ given by $L_x:={T_{x}N}\cap{\cal
D}_{{\lambda x}}$ and $Q_x:={T_{x}N}\cap{\cal
D}_{{-\lambda x}}$ for all $x\in N$. It is easy to check
that $L$ and $Q$ define two mutually orthogonal Legendrian
foliations of $N^{2m+1}$ and that the bi-Legendrian connection
corresponding to $\left(L,Q\right)$ is just the connection induced
on $N$ by the bi-Legendrian connection associated to $\left({\cal
D}_{\lambda},{\cal D}_{-\lambda}\right)$. The
result now follows from Theorem \ref{main}.
\end{proof}

We conclude showing that the assumption in Theorem \ref{main} that
$\xi$ must be not Killing is essential. This can be seen in the
following example.

\begin{exam}
\emph{Consider the sphere
$S^3=\left\{\left(x_1,x_2,x_3,x_4\right)\in\mathbb{R}^4:x_1^2+x_2^2+x_3^2+x_4^2=1\right\}$ with the following Sasakian structure:}
\begin{equation*}
\eta=x_3dx_1+x_4dx_2-x_1dx_3-x_2dx_4, \
\xi=x_3\frac{\partial}{\partial x_1}+x_4\frac{\partial}{\partial
x_2}-x_1\frac{\partial}{\partial x_3}-x_2\frac{\partial}{\partial
x_4},
\end{equation*}
\begin{equation*}
g=\left(%
\begin{array}{cccc}
  1 & 0 & 0 & 0 \\
  0 & 1 & 0 & 0 \\
  0 & 0 & 1 & 0 \\
  0 & 0 & 0 & 1 \\
\end{array}%
\right), \ \phi=\left(%
\begin{array}{cccc}
  0 & 0 & -1 & 0 \\
  0 & 0 & 0 & -1 \\
  1 & 0 & 0 & 0 \\
  0 & 1 & 0 & 0 \\
\end{array}%
\right).
\end{equation*}
\emph{Set $X:=x_2\frac{\partial}{\partial
x_1}-x_1\frac{\partial}{\partial x_2}-x_4\frac{\partial}{\partial
x_3}+x_3\frac{\partial}{\partial x_4}$ and $Y:=\phi
X=x_4\frac{\partial}{\partial x_1}-x_3\frac{\partial}{\partial
x_2}+x_2\frac{\partial}{\partial x_3}-x_1\frac{\partial}{\partial
x_4}$, and consider the $1$-dimensional distributions $L$ and $Q$
on $S^3$ generated by $X$ and $Y$, respectively. An easy
computation shows that $\left[X,\xi\right]=-2Y$,
$\left[Y,\xi\right]=2X$, $\left[X,Y\right]=2\xi$. Thus $L$ and $Q$
defines two non-degenerate, orthogonal Legendrian foliations on the Sasakian manifold
$(S^3,\phi,\xi,\eta,g)$. For the
bi-Legendrian connection corresponding to this bi-Legendrian
structure, we have, after a straightforward computation, $\bar\nabla_X
X=\bar\nabla_X Y=\bar\nabla_X \xi=0$ and  $\bar\nabla_Y X=\bar\nabla_Y Y=\bar\nabla_Y
\xi=0$. Therefore $\bar\nabla\phi=0$ and so, by Proposition \ref{metrica}, also $\bar\nabla g=0$. Moreover, as $\xi$ is Killing obviously $\bar\nabla h=0$.}
\end{exam}

\small


\begin{thebibliography}{99}
\bibitem{blair-1}D. E. Blair - \textit{Two remarks on contact metric
structures}, T\^{o}hoku Math. J. \textbf{28} (1976), 373--379.
\bibitem{blair1}D. E. Blair, T. Kouforgiorgos, B. J. Papantoniou -
\textit{Contact metric manifolds satisfying a nullity condition},
Israel J. Math. \textbf{91} (1995), 189--214.
\bibitem{blair0}D. E. Blair - \textit{Riemannian geometry of contact and symplectic
manifolds}, Birkh\"{a}user, 2002.
\bibitem{boeckx0}E. Boeckx - \textit{A class of locally $\varphi$-symmetric contact metric
spaces}, Arch. Math. (Basel) \textbf{72} (1999), 466--472.
\bibitem{boeckx1}E. Boeckx - \textit{A full classification of contact metric
$\left(\kappa,\mu\right)$-spaces}, Illinois J. Math. \textbf{44}
(2000), 212--219.
\bibitem{boeckx2}E. Boeckx, J. T. Cho - \textit{$\eta$-parallel contact metric
spaces}, Diff. Geom. Appl. \textbf{22} (2005), 275--285.
\bibitem{mino2}B. Cappelletti Montano - \emph{Bi-Legendrian connections},
Ann. Polon. Math. \textbf{86} (2005), 79--95.
\bibitem{mino3}B. Cappelletti Montano - \textit{Characteristic classes and Ehresmann connections for Legendrian
foliations}, Publ. Math. Debrecen, \textbf{70} (2007), 395--425.
\bibitem{mino5}B. Cappelletti Montano - \textit{Some remarks on the generalized Tanaka-Webster connection of a contact metric
manifold}, preprint.
\bibitem{luigiamino}B. Cappelletti Montano, L. Di Terlizzi -
\textit{${\cal{D}}$-homothetic transformations for a generalization
of contact metric manifolds}, Bull. Belg. Math. Soc. Simon Stevin
\textbf{14} (2007), 277--289.
\bibitem{cho}J. T. Cho - \textit{A new class of contact Riemannian manifolds}, Israel J. Math. \textbf{109} (1999), 299--318.
\bibitem{jayne2}N. Jayne - \textit{A note on the sectional curvature of Legendre
foliations}, Yokohama Math. J. \textbf{41} (1994), 153--161.
\bibitem{jayne1}N. Jayne - \textit{Contact metric structures and Legendre
foliations}, New Zealand J. Math. \textbf{27} n. 1 (1998), 49--65.
\bibitem{kouforgiorgos}T. Kouforgiorgos, C. Tsichlias -
\textit{On the existence of a new class of contact metric
manifolds}, Can. Math. Bull. \textbf{43} (2000), 440--447.
\bibitem{libermann}P. Libermann - \textit{Legendre foliations on
contact manifolds}, Diff. Geom. Appl. \textbf{1} (1991), 57--76.
\bibitem{pang}M. Y. Pang - \textit{The structure of Legendre
foliations}, Trans. Amer. Math. Soc. \textbf{320} n. 2 (1990),
417--453.
\bibitem{tanaka}N. Tanaka - \textit{On non-degenerate real hypersurfaces, graded Lie algebras and Cartan
connections}, Japan J. Math. \textbf{20} (1976), 131--190.
\bibitem{tanno0}S. Tanno - \textit{The topology of contact Riemannian manifolds}, Illinois J. Math. \textbf{12} (1968), 700--717.
\bibitem{tanno}S. Tanno - \textit{Ricci curvatures of contact Riemannian
manifolds}, T\^{o}hoku Math. J. \textbf{40} (1988), 441--448.
\bibitem{tanno2}S. Tanno - \textit{Variational problems on contact Riemannian
manifolds}, Trans. Amer. Math. Soc. \textbf{314} n. 1 (1989),
349--379.
\bibitem{tripathi}M. M. Tripathi, T. Sasahara, J.-S. Kim - \emph{On invariant submanifolds of contact metric manifolds}, Tsukuba J. Math. 29 (2005), 495--510
\bibitem{webster}S. M. Webster - \textit{Pseudo-Hermitian structures on a real
hypersurface}, J. Diff. Geom. \textbf{13} (1978), 25--41.
\end{thebibliography}
\end{document}